\newtheorem{defn}{Definition}[section]
\newtheorem{thm}[defn]{Theorem}
\newtheorem{lem}[defn]{Lemma}
\newtheorem*{prob}{Problem}
\newtheorem{cor}[defn]{Corollary}
\newtheorem{prop}[defn]{Proposition}
\newtheorem{rem}[defn]{Remark}
\newcommand{\N}{\mathbb{N}}
\newcommand{\R}{\mathbb{R}}
\newcommand{\C}{\mathbb{C}}
\newcommand{\Z}{\mathbb{Z}}
\newcommand{\T}{\mathbb{T}}
\newcommand{\D}{\mathbb{D}}
\newcommand{\Prob}{\mathbb{P}}
\newcommand{\E}{\mathbb{E}}
\begin{document}
\title{On the \text{UMD} constants for a class of iterated $L_p(L_q)$ spaces}
\author{Yanqi QIU}
\thanks{The author was partially supported by ANR grant 2011 BS01 008 01}
\address{Inst. Math. Jussieu, \'Equipe d'Analyse Fonctionnelle}
\address{Universit\'e Paris VI, 75252 Paris Cedex 05, France}
\email{yanqi-qiu@math.jussieu.fr}
\keywords{$\text{UMD}$ property, analytic $\text{UMD}$ property, iterated $L_p(L_q)$ spaces, super-reflexive non-$\text{UMD}$ Banach lattices}
\begin{abstract}
Let $1 < p \neq q < \infty$ and $(D, \mu) = (\{\pm 1\}, \frac{1}{2} \delta_{-1} + \frac{1}{2}\delta_1)$. Define by recursion: $X_0 = \C$ and $X_{n+1} = L_p(\mu; L_q(\mu; X_n))$. In this paper, we show that there exist $c_1=c_1(p, q)>1$ depending only on $p, q$ and $ c_2 = c_2(p, q, s)$ depending on $p, q, s$, such that the $\text{UMD}_s$ constants of $X_n$'s satisfy $c_1^n \leq C_s(X_n) \leq c_2^n$ for all $1 < s < \infty$. Similar results will be showed for the analytic $\text{UMD}$ constants. We mention that the first super-reflexive non-$\text{UMD}$ Banach lattices were constructed by Bourgain. Our results yield another elementary construction of super-reflexive non-$\text{UMD}$ Banach lattices, i.e. the inductive limit of $X_n$, which can be viewed as iterating infinitely many times $L_p(L_q)$.
\end{abstract}
\maketitle


\section{Introduction}
A Banach space $X$ is $\text{UMD}$ if for all (or equivalently, for some) $1 < s < \infty$ there is a constant $C > 0$ depending only on $s$ and $X$ such that
\begin{eqnarray}\label{definition_umd}
\sup_{\varepsilon_k \in \{-1, 1\}} \| \sum_{k=0}^n \varepsilon_k df_k \|_{L_s(X)} \leq C \| \sum_{k=0}^n df_k \|_{L_s(X)}
\end{eqnarray}
for all $n \geq 0$ and all $X$-valued martingale difference sequences $(df_k)_{k=0}^n$. The best such $C$ is called the $\text{UMD}_s$ constant of $X$ and will be denoted by $C_s(X)$ in the sequel. It is well-known that in the above definition, we can restrict to the dyadic martingale differences and the best constant remains the same. The $\text{UMD}$ property for Banach spaces was introduced by Maurey and Pisier. The reader is refered to Burkholder's papers \cite{Burkholder_geo_UMD, Burkholder3} for the details of the $\text{UMD}$ property.

Let $\T = \{ z \in \C: | z | =1 \}$ be the one dimensional torus equipped with the normalised Haar measure $m$. Consider the canonical filtration on the probability space $(\T^\N, m^{\otimes \N})$ defined by $$ \sigma(z_0) \subset \sigma(z_0, z_1) \subset \cdots \subset \sigma(z_0, z_1, \cdots, z_n) \subset \cdots.$$ By definition, a Hardy martingale in $L_s(\T^\N; X)$ is a martingale $f = (f_n)_{n\geq 0}$ with respect to the canonical filtration such that $\sup_n \| f_n \|_{L_s} < \infty$, and such that the martingale difference $df_n = f_n -f_{n-1}$ (by convention, $d f_0: = f_0$) is analytic in the last variable $z_n$, i.e., $df_n$ has  the form:$$df_n (z_0, \cdots, z_{n-1}, z_n) = \sum_{k\ge 1} \phi_{n, k} (z_0, \cdots, z_{n-1}) z_n^{k}.$$
In the above definition of $\text{UMD}$ spaces, if the Banach space is over the complex field $\C$, and if we restrict to the Hardy martingales, then a different class of Banach spaces is defined, i.e. the analytic $\text{UMD}$ class ($\text{AUMD}$ by abreviation). The best constant is called the $\text{AUMD}_s$ constant of $X$ and will be denoted by $C_s^{a}(X)$. Note that $\text{UMD}$ implies $\text{AUMD}$ but not conversely, for instance, $L_1(\T, m)$ is an $\text{AUMD}$ space which is not $\text{UMD}$ (cf. \cite{Garling_UMD}).

It is well-known that $\text{UMD}$ implies super-reflexivity but not conversely. The first super-reflexive non-$\text{UMD}$ Banach space was constructed by Pisier in \cite{Pisier_superR_nonUMD}. Super-reflexive non-$\text{UMD}$ Banach lattices were later constructed by Bourgain in \cite{Bourgain, Bourgain_UMD}. We refer to Rubio de Francia's paper \cite{Rubio_de_Francia_mart} for some open problems related to the super-reflexive non-$\text{UMD}$ Banach lattices.

The main topic of this paper is the investigation of the $\text{UMD}$ constants of a family of iterated $L_p(L_q)$-spaces. As a consequence of our results, we give an elementary construction of super-reflexive non-$\text{UMD}$ Banach lattices.


\section{ Some elementary inequalities} 

We will use the following lemma.
\begin{lem}\label{lem}
Let $(\Omega, \nu)$ be a measure space such that $\nu$ is finite. Suppose that $\alpha \ne 1$ and $ 0 < \alpha < \infty$. If $F, f \in L_{\alpha}(\Omega, \nu) \bigcap L_1(\Omega, \nu)$ satisfy $$\int (| F |+|g|)^{\alpha} d \nu \leq \int (| f | + | g |)^{\alpha} d \nu$$ for all $ g \in L_\infty(\Omega, \nu)$. Then $| F | \leq | f |$ a.e..
\end{lem}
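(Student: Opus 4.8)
The plan is to exploit the freedom in the choice of $g$ by concentrating a large test function on a well-chosen set and reading off pointwise information. Since every quantity occurring in the hypothesis and conclusion depends on $F, f, g$ only through $|F|, |f|, |g|$, and since $|g|$ ranges over all nonnegative functions of $L_\infty(\Omega,\nu)$ as $g$ does, I may assume from the outset that $F, f, g \ge 0$ and that the hypothesis reads $\int (F+g)^\alpha\,d\nu \le \int (f+g)^\alpha\,d\nu$ for every $0 \le g \in L_\infty(\Omega,\nu)$. Arguing by contradiction, suppose $\nu(\{F > f\}) > 0$. Writing $\{F>f\}$ as the countable union, over rationals $0\le b<a$, of the sets $\{f<b\}\cap\{F>a\}$, I can fix such a pair $b<a$ for which $A:=\{f<b\}\cap\{F>a\}$ has $\nu(A)>0$; on $A$ one then has $F>a$ and $f<b$.

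For $\alpha>1$ I would test against $g=t\,\mathbf 1_A$ and let $t\to\infty$. On $A^c$ this contributes the fixed finite number $M:=\int_{A^c}(F^\alpha-f^\alpha)\,d\nu$, which is finite because $F,f\in L_\alpha$; while on $A$ the monotonicity of $s\mapsto(s+t)^\alpha$ together with $F>a>b>f$ gives $\int_A\big[(F+t)^\alpha-(f+t)^\alpha\big]\,d\nu \ge \nu(A)\,\big[(a+t)^\alpha-(b+t)^\alpha\big]$. By the mean value theorem $(a+t)^\alpha-(b+t)^\alpha=\alpha\,\xi_t^{\alpha-1}(a-b)$ with $\xi_t\ge b+t$, which tends to $+\infty$ as $t\to\infty$ precisely because $\alpha-1>0$. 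Hence the left-hand side of the hypothesis tends to $+\infty$, contradicting that it is bounded above by the right-hand side.

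For $0<\alpha<1$ the same bump on $A$ is useless, since $(a+t)^\alpha-(b+t)^\alpha\to 0$; the key idea is instead to place the large test function on the complement, taking $g=t\,\mathbf 1_{A^c}$. Now the contribution of $A$ is the fixed positive number $\int_A(F^\alpha-f^\alpha)\,d\nu\ge\nu(A)(a^\alpha-b^\alpha)>0$, while on $A^c$ the mean value theorem yields the pointwise bound $\big|(F+t)^\alpha-(f+t)^\alpha\big|\le \alpha\,t^{\alpha-1}\,|F-f|$, using that $\xi^{\alpha-1}\le t^{\alpha-1}$ for the intermediate point $\xi\ge t$ when $\alpha-1<0$. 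Integrating and invoking $F,f\in L_1$ shows the $A^c$-contribution is at most $\alpha\,t^{\alpha-1}\int_\Omega(F+f)\,d\nu$ in absolute value, which tends to $0$ as $t\to\infty$ because $\alpha-1<0$. Letting $t\to\infty$ in the hypothesis therefore forces $\int_A(F^\alpha-f^\alpha)\,d\nu\le 0$, again a contradiction.

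The main obstacle is conceptual rather than computational: one must select the correct test function in each regime, since the sign of $\alpha-1$ decides whether a large bump makes the difference of $(s+t)^\alpha$ blow up or collapse, so the bump must sit on $A$ when $\alpha>1$ and on $A^c$ when $\alpha<1$. The integrability hypotheses $F,f\in L_\alpha\cap L_1$ are exactly what guarantees finiteness of $M$ in the first case and decay of the tail in the second. Once this dichotomy is in place, the contradiction is immediate in both cases, and this completes the proof.
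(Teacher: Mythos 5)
Your proof is correct, and it takes a genuinely different route from the paper's. The paper argues variationally: it first rescales the hypothesis to $\int(\varepsilon|F|+|g|)^{\alpha}d\nu \leq \int(\varepsilon|f|+|g|)^{\alpha}d\nu$ for $g$ bounded below, differentiates at $\varepsilon=0^{+}$ (mean value theorem plus dominated convergence, with the $L_{\alpha}\cap L_{1}$ assumption supplying the dominating function) to obtain the linearized inequality $\int |F|\,|g|^{\alpha-1}d\nu \leq \int |f|\,|g|^{\alpha-1}d\nu$, and then substitutes $|g|^{1/(\alpha-1)}$ and approximates to get $\int |F|\,|g|\,d\nu \leq \int |f|\,|g|\,d\nu$ for all $g\in L_{\infty}$, which forces $|F|\leq|f|$ a.e. That treatment is essentially uniform in $\alpha$ (only the domination step distinguishes $\alpha<1$ from $\alpha>1$) and yields a stronger intermediate statement: positivity of $|f|-|F|$ against every bounded density. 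You instead argue by contradiction, localizing the bad set $A=\{f<b\}\cap\{F>a\}$ with rationals $b<a$ and testing with a single large bump, $t\chi_{A}$ when $\alpha>1$ and $t\chi_{A^{c}}$ when $0<\alpha<1$; the dichotomy is genuinely needed, since the sign of $\alpha-1$ decides whether the bump makes the difference blow up or collapse. Your version is more elementary (no interchange of limit and integral beyond explicit pointwise bounds) and it isolates cleanly where each integrability hypothesis enters: $F,f\in L_{\alpha}$ gives finiteness of the off-bump term, and $F,f\in L_{1}$ gives the $O(t^{\alpha-1})$ decay of the tail when $\alpha<1$. The price is the case split and the absence of any quantitative intermediate inequality; the paper's linearization, by contrast, is the kind of statement that feeds directly into the duality-flavored applications that follow (Proposition \ref{genlem} and the non-contractivity of $P$). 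Both proofs are complete and correct.
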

\begin{proof}
Consider first those $ g \in L_\infty(\Omega, \nu)$ such that there exists $\delta > 0$ and $ | g | \geq \delta$ a.e.. If $F, f$ satisfy the condition in the statement, then for all $\varepsilon > 0$, we have \begin{eqnarray}\label{variation}\int (\varepsilon | F |+|g|)^{\alpha} d \nu \leq \int (\varepsilon | f | + | g |)^{\alpha} d \nu.\end{eqnarray} 
By the mean value theorem, there exists $ \theta = \theta_\varepsilon \in (0, 1)$, such that $$\frac{(\varepsilon | f | + | g | )^{\alpha} - | g |^{\alpha}}{\varepsilon} = \alpha (\theta \varepsilon | f | + | g |)^{\alpha -1} | f |.$$ If $\alpha < 1$, then $ (\theta \varepsilon | f | + | g |)^{\alpha -1} | f | \leq | g|^{\alpha-1} | f | \in L_1(\Omega, \nu)$ and if $\alpha > 1$, then for $0 < \varepsilon <1$, we have $0< \theta \varepsilon < 1$ and hence $(\theta \varepsilon | f | + | g |)^{\alpha -1} | f | \leq 2^{\alpha-1}(| f |^{\alpha} + | g |^{\alpha-1} | f |) \in L_1(\Omega, \nu)$. By the dominated convergence theorem, we have $$ \lim_{\varepsilon \to 0^{+}} \frac{\int (\varepsilon | f | + | g |)^{\alpha} d \nu - \int | g |^{\alpha} d \nu }{\varepsilon} = \alpha \int | f | | g |^{\alpha -1} d \nu.$$ The same equality holds for $F$. Combining this with \eqref{variation}, we get $$\int | F | | g |^{\alpha-1} d \nu \leq \int | f | |g |^{\alpha-1} d \nu.$$ Replacing $g$ by $|g|^{\frac{1}{\alpha-1}}$ yields $$\int | F | | g | d \nu \leq \int | f | |g | d \nu.$$ By approximation, the above inequality holds for all $g \in L_\infty(\Omega, \nu)$. Hence $|F| \leq |f |$ a.e., as announced.
\end{proof}

\begin{prop}\label{genlem}
Let $(\Omega, \nu)$ be a measure space such that $\nu$ is finite. Suppose that $ 1 \leq p \ne q < \infty$. If $F, f \in L_p(\Omega, \nu) \bigcap L_q(\Omega, \nu)$ satisfy $$\int (| F | ^q + |g|^q)^{p/q} d \nu \leq \int (| f |^q + | g| ^q)^{p/q} d \nu$$ for all $g \in L_\infty(\Omega, \nu)$. Then $|F| \leq | f |$ a.e..
\end{prop}
\begin{proof}
This is just a reformulation of Lemma \ref{lem}.
\end{proof}

Let $D = \{-1, 1\}$ be the Bernoulli probability space equipped with the measure $\mu = \frac{1}{2} \delta_{-1} + \frac{1}{2}\delta_1$. For any $1\leq q \leq \infty$, the 2-dimensional $\ell_q$-space will be denoted by $\ell_q^{(2)}$.

\begin{prop}\label{mainlem}
Suppose that $1 \leq p\ne q \leq \infty$. Let $P$ be the projection on $L_p(\mu;\ell_q^{(2)})$ defined by $$\begin{array}{cccc} P: &L_p(\mu; \ell_q^{(2)}) & \rightarrow & L_p(\mu; \ell_q^{(2)}) \\ & (f, g) & \mapsto & (\E f, g)\end{array},$$ where $\E$ is the expectation.
Then $P$ is not contractive.
\end{prop}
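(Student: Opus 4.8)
The plan is to argue by contradiction, assuming $P$ is contractive, and to feed the resulting family of inequalities into Proposition \ref{genlem}. First I would unwind the norm on $L_p(\mu;\ell_q^{(2)})$: identifying an element with a pair $(f,g)$ of scalar functions on $D=\{-1,1\}$, one has
$$\|(f,g)\|_{L_p(\mu;\ell_q^{(2)})}^p = \int_D (|f|^q+|g|^q)^{p/q}\,d\mu ,$$
while by definition $P(f,g)=(\E f,g)$, where $\E f$ is the \emph{constant} function equal to the mean $\tfrac12\bigl(f(-1)+f(1)\bigr)$. One checks that $P$ is indeed a projection, and that ``$P$ is not contractive'' means precisely that $\|P(f,g)\|>\|(f,g)\|$ for some $(f,g)$.

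Next, suppose toward a contradiction that $\|P(f,g)\|\le\|(f,g)\|$ for every $(f,g)$. Fixing an arbitrary $f$ and setting $F:=\E f$ (a constant, hence in every $L_r(\mu)$), contractivity gives
$$\int_D (|F|^q+|g|^q)^{p/q}\,d\mu \le \int_D (|f|^q+|g|^q)^{p/q}\,d\mu$$
simultaneously for all $g\in L_\infty(\mu)$. This is exactly the hypothesis of Proposition \ref{genlem} (with $\nu=\mu$), so I would invoke it to conclude $|F|\le|f|$ $\mu$-a.e., i.e. $|\E f|\le|f|$ a.e. Since $f$ was arbitrary, I then exhibit a single $f$ violating this: take $f(-1)=2$, $f(1)=0$, so that $\E f=1$ while $|f(1)|=0<1$ on the atom $\{1\}$, which has positive measure. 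This contradiction shows $P$ cannot be contractive whenever $1\le p\ne q<\infty$.

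The step I expect to be the heart of the matter is recognizing that contractivity of $P$ delivers the integral inequality \emph{for all $g$ at once}, which is precisely what Proposition \ref{genlem} consumes; the conclusion then carries the transparent (and manifestly impossible) meaning that a conditional expectation would have to dominate the function pointwise. The only remaining care concerns the endpoint cases, since Proposition \ref{genlem} requires $p,q<\infty$. When $q=\infty$ or $p=\infty$ I would instead produce explicit witnesses: for example, with $f=(2,0)$ and a suitably chosen $g$ (such as $g=(2,0)$ in the case $q=\infty$, or $g=(0,2)$ in the case $p=\infty$), a direct computation of the two norms shows $\|P(f,g)\|>\|(f,g)\|$, completing the remaining cases.
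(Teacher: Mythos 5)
Your proposal is correct, and its core — the case $1\le p\ne q<\infty$ — is exactly the paper's argument: contractivity of $P$ yields the inequality $\int (|\E f|^q+|g|^q)^{p/q}\,d\mu \le \int (|f|^q+|g|^q)^{p/q}\,d\mu$ for all $g$ at once, Proposition \ref{genlem} then forces $|\E f|\le |f|$ a.e., and a non-constant $f$ such as $f(-1)=2$, $f(1)=0$ (for which $\E f \equiv 1 > 0 = |f(1)|$) gives the contradiction; your explicit witness here actually fills in a detail the paper leaves implicit. Where you genuinely diverge is in the endpoint cases. The paper handles $p=\infty$, $1<q<\infty$ and $1\le p<\infty$, $q=\infty$ by duality — observing that $P^*$ on $L_{p'}(\mu;\ell_{q'}^{(2)})$ has the same form as $P$, so non-contractivity transfers — and only produces an explicit example in the single case $(p,q)=(\infty,1)$. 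You instead exhibit direct witnesses in all endpoint cases, and they check out: for $q=\infty$, $p<\infty$, with $f=g=(2,0)$ one gets $\|(f,g)\|^p=\tfrac12\,2^p$ while $\|P(f,g)\|^p=\tfrac12\,(2^p+1)$; for $p=\infty$, $q<\infty$, with $f=(2,0)$, $g=(0,2)$ one gets $\|(f,g)\|=2$ while $\|P(f,g)\|=(1+2^q)^{1/q}>2$. Your route is more self-contained and elementary (no adjoint identification needed, and it treats $q=1$ and $1<q<\infty$ uniformly), at the cost of verifying concrete computations; the paper's duality argument is shorter to state but leans on identifying $P^*$ and on the already-settled finite case.
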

\begin{proof}
Assume first that both $p, q$ are finite. If $P$ is contractive, then for any two functions $f $ and $g$, we have $$ \int (| \E f |^q + |g|^q )^{p/q} d\mu \leq \int  (| f |^q + |g|^q )^{p/q} d\mu.$$
By Proposition \ref{genlem}, it follows that $|\E (f) |\leq |f|$, which is a contradiction, hence $P$ is not contractive. 

If $ p = \infty$ and $ 1< q < \infty$, then $p' =1$ and $ 1< q' <\infty$. Since the adjoint map $P^*$ on $L_1(\mu; \ell_{q'}^{(2)})$ has the same form as $P$, the preceding argument shows that $P^*$ and hence $P$ is not contractive.

If $p = \infty$ and $ q = 1$. Assume $P$ is contractive, then we have \begin{eqnarray}\label{extreme}\big \| | \E f | + | g | \big\|_\infty \leq \big \| | f | + | g | \big \|_\infty.\end{eqnarray} Consider $ f = 1 + \varepsilon, g = 1-\varepsilon$, where $\varepsilon: D \rightarrow D$ is the identity function. Then the left hand side of \eqref{extreme} equals to 3 while the right hand side equals to 2. This contradiction shows that $P$ is not contractive.

If $1\leq p < \infty$ and $q = \infty$, then $ 1< p' \leq \infty$ and $q' = 1$, hence $P^*$ is not contractive. It follows that $P$ is not contractive. 
\end{proof}

The norm of $P$ on $L_p(\mu; \ell_q^{(2)})$ will be denoted by $c(p, q)$ in the sequel. If $p=q$, then $c(p,p) = 1$. If $1 \leq p \neq q \leq \infty$, then \begin{eqnarray} \label{constant} c(p,q) > 1. \end{eqnarray}

\begin{rem}
It is not difficult to check that $c(\infty, 1) = c(1, \infty) = \frac{3}{2}$. But we do not know the exact value of $c(p,q)$ for general $p \ne q$.
\end{rem}

As usual, we set $$ H_p(\T) = \{ f \in L_p(\T, m): \hat{f}(k) = 0, \forall k \in \Z_{ < 0} \}.$$ We will say that a measurable function $f : \T \rightarrow \C$ is bounded from below, if there exists $\delta > 0$, such that $| f | \geq \delta$ a.e. on $\T$. If $f\in L_p(\T)$ is bounded from below, then the geometric mean $M(|f|)$ of $|f|$ is defined by $$\log M(|f|) = \int_\T \log | f(z)| dm(z).$$ In particular, if $f: \D \rightarrow \C$ is an outer function, then \begin{eqnarray}\label{geo_mean_outer} M(|f|) = |f(0)| = |\E f|. \end{eqnarray}

The following elementary proposition will be used in \S \ref{analytic_umd} when we treat the analytic $\text{UMD}$ property.
\begin{prop}\label{geo_mean}
Suppose that $1 \leq p\neq q < \infty$. Define $\kappa(p, q)$ to be the best constant $C$ satisfying the property:  For any measurable partition $\T = A \dot{\cup} B$ with $m(A) = m(B) = \frac{1}{2}$, for any function $ f= f_1 \chi_A + f_2 \chi_B$ with $f_1>0, f_2 > 0$ and any function $g = g_1 \chi_A + g_2\chi_B$, we have $$ \int_\T  (M(|f|)^q + | g |^q)^{p/q} dm \leq C^p  \int_\T (|f|^q + | g |^q)^{p/q} dm.$$ Then $\kappa(p,q) > 1$.
\end{prop}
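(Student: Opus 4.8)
The plan is to show directly that the constant $C=1$ is not admissible, that is, to produce one partition $\T = A \,\dot\cup\, B$ and one pair of functions $f,g$ of the prescribed form for which
$$ \int_\T (M(|f|)^q + |g|^q)^{p/q}\,dm > \int_\T (|f|^q + |g|^q)^{p/q}\,dm. $$
Since $\kappa(p,q)^p$ is by definition the supremum over all admissible $f,g$ of the ratio of the left-hand integral to the right-hand one, a single such example forces $\kappa(p,q) > 1$. I would stress at the outset that this cannot be deduced from Proposition \ref{mainlem}: the geometric mean is dominated by the arithmetic mean $\E f$, so the failure of contractivity for $\E$ makes the geometric-mean inequality \emph{easier}, not harder, to satisfy. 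A fresh infinitesimal computation is therefore required.

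The key reduction is to take $f$ and $g$ constant on each cell. Because $m(A)=m(B)=\tfrac12$, the geometric mean is simply $M(|f|) = \sqrt{f_1 f_2}$, a constant, so I normalize it to $1$ by setting $f_1 = e^{h}$ and $f_2 = e^{-h}$ for a real parameter $h$, and I put $g = u\chi_A + v\chi_B$ with constants $u,v \ge 0$ to be chosen. Introducing
$$ \Phi(h) = \frac12\big[(e^{qh}+u^q)^{p/q} + (e^{-qh}+v^q)^{p/q}\big] - \frac12\big[(1+u^q)^{p/q} + (1+v^q)^{p/q}\big], $$
which equals $\int_\T(|f|^q+|g|^q)^{p/q}\,dm - \int_\T(M(|f|)^q+|g|^q)^{p/q}\,dm$, one has $\Phi(0)=0$, and the goal becomes to find $u,v$ making $\Phi(h) < 0$ for small $h>0$.

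The main step is then a first-order analysis: differentiating gives
$$ \Phi'(0) = \frac{p}{2}\big[(1+u^q)^{p/q-1} - (1+v^q)^{p/q-1}\big]. $$
Since $t\mapsto t^{p/q-1}$ is increasing exactly when $p>q$, I would choose $u<v$ (say $u=0,\ v=1$) if $p>q$, and $u>v$ if $p<q$; either way $\Phi'(0) < 0$, so $\Phi(h)<0$ for small $h>0$, which is precisely the reversed inequality, and hence $\kappa(p,q)>1$.

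I expect the genuine obstacle to be recognizing that the asymmetry must be placed in $g$. The symmetric choice $u=v$ gives $\Phi'(0)=0$, and in fact $\Phi(h)\ge 0$ there, because $t\mapsto (e^{qt}+c^q)^{p/q}$ is convex in $t$ (its second derivative is $pq\,e^{qt}(e^{qt}+c^q)^{p/q-2}(c^q + \tfrac{p}{q}e^{qt}) > 0$); so convexity in the logarithmic variable defeats any symmetric test function. The content of the proof is thus to break this symmetry through $g$, with the direction of the asymmetry governed by the sign of $p-q$.
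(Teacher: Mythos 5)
Your proof is correct, but it takes a genuinely different route from the paper's. The paper argues by contradiction and reuses machinery already established: if $\kappa(p,q)\le 1$, then --- since $M(|f|)$, $f$, and the admissible $g$'s are all measurable with respect to the partition $\{A,B\}$, so that the two-valued $g$'s exhaust $L_\infty$ of the underlying two-point measure space --- Proposition \ref{genlem} (i.e.\ the variational Lemma \ref{lem}) applies with $F=M(f)$ and yields the pointwise bound $M(f)\le f$ a.e.; this is absurd because $M(f)=\sqrt{f_1f_2}>\min(f_1,f_2)$ whenever $f_1\ne f_2$. You instead give a direct, self-contained construction: normalizing $f_1=e^h$, $f_2=e^{-h}$, fixing an asymmetric $g$, and showing by a first-order expansion in $h$ that the $C=1$ inequality already fails for small $h>0$, with the sign of $p-q$ dictating which way to tilt $g$. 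Both proofs are at bottom infinitesimal arguments (the paper's differentiation is hidden inside the proof of Lemma \ref{lem}, where the perturbation is in the amplitude of $f$ against arbitrary $g$, rather than in the shape of $f$ against one well-chosen $g$), but yours bypasses that lemma entirely, produces an explicit violating example (hence in principle an explicit lower bound on $\kappa(p,q)$), and your convexity remark isolates precisely why the asymmetry must be carried by $g$ --- a point invisible in the paper's treatment. What the paper's route buys in exchange is brevity and uniformity: no case distinction $p\gtrless q$, and the contradiction is immediate once Proposition \ref{genlem} is in hand. Your opening observation that Proposition \ref{mainlem} cannot simply be cited here is also accurate: since $M(|f|)\le \E|f|$, failure of contractivity for the conditional expectation says nothing about the geometric mean, and indeed the paper invokes Proposition \ref{genlem}, not Proposition \ref{mainlem}.
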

\begin{proof}
Assume $k(p, q) \leq 1$. Fix any measurable partition $\T = A \dot{\cup} B$ such that $m(A) = m(B) = \frac{1}{2}$. Consider the 2-valued functions $f = f_1\chi_A + f_2 \chi_B$ and $g = g_1 \chi_A + g_2 \chi_B$ with $f_1, f_2$ positive scalars. By Proposition \ref{genlem}, $M(f) \leq  f $. However, one can easily check that $M( f ) = f_1^{1/2} f_2^{1/2}$. If $f_1 > f_2$, then $M( f ) >   f_2^{1/2}  f_2^{1/2} =  f_2 $, which contradicts to $ M(f) \leq f$. Whence the announced statement.
\end{proof}


\section{\text{UMD} constants of iterated $L_p(L_q)$ spaces}\label{mainresults}
The following definition is essential in the sequel.
\begin{defn}
Consider a Banach space $X$ with a fixed family of vectors $\{ x_i\}_{i \in I}$. We define $S(X; \{ x_i \})$ to be the best constant $C$ such that \begin{eqnarray}\label{definition_S(X)} \Big \| \sum_{k=0}^N \E^{\mathcal{A}_k} (\theta_k) x_{i_k} \Big \|_{L_1(\Omega, \Prob; X)} \leq C \Big \| \sum_{k=0}^N \theta_k x_{i_k} \Big \|_{L_\infty(\Omega, \Prob; X)}\end{eqnarray} holds for any $N\in \N$, any probability space $(\Omega, \mathcal{F}, \Prob)$ equipped with a filtration $\mathcal{A}_0 \subset \mathcal{A}_1 \subset \cdots \subset \mathcal{A}_n \subset \cdots \subset \mathcal{F}$, any $N+1$ distinct indices $\{ i_0, i_1, \cdots, i_N\} \subset I$ and any $N+1$ functions $\theta_0, \theta_1, \cdots, \theta_N$ in $L_\infty(\Omega, \mathcal{F}, \Prob)$. 

If there does not exist such constant,  we set $S(X; \{x_i\}) = \infty$. 
\end{defn}
In what follows, we are mostly interested in the special case when $\{ x_i \}$ is a 1-unconditional basic sequence, since in this case we can relate $S(X; \{ x_i \})$ to the $\text{UMD}$ constants of $X$. If $\{ x_i \}$ is clear from the context and there is no confusion, we will use the simplified notation $S(X)$ for $S(X; \{ x_i\})$. In particular, if $X$ has a natural basis, then $S(X)$ will always mean to be calculated with this basis.

We will need the following well-known Stein inequality in $\text{UMD}$ spaces, which was originally proved by Bourgain \cite{Bourgain_H1_BMO}. For the sake of completeness, we include the proof.
\begin{thm}\label{stein}
Let $X$ be a $\text{UMD}$ space. Then for any $1 < s <\infty$, any finite sequences of functions $(F_k)_{k \geq 0}$ in $L_s(\Omega, \Prob; X)$ and any filtration $\mathcal{A}_0 \subset \mathcal{A}_1 \subset \cdots \subset \mathcal{A}_n \subset \cdots$ on $(\Omega, \Prob)$, we have \begin{eqnarray}\label{stein_umd} \Big \| \sum_k \varepsilon_k \E_k (F_k) \Big\|_{L_s(\mu_\infty \times  \Prob; X)} \leq C_s(X) \Big \| \sum_k \varepsilon_k F_k \Big \|_{L_s(\mu_\infty \times  \Prob; X)},\end{eqnarray} where $\E_k = \E^{\mathcal{A}_k}$ and $(\varepsilon_k)_{k \geq 0}$ is the usual Rademacher sequence on $(D^{\N}, \mu_\infty)$, $\mu_\infty = \mu^{\otimes \N}$.
\end{thm}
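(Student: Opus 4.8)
The plan is to reduce the inequality to the defining \text{UMD} inequality \eqref{definition_umd} by exhibiting $\sum_k \varepsilon_k \E_k F_k$ as a selection of differences of a single martingale whose terminal value is $\sum_k \varepsilon_k F_k$. Everything takes place on the product probability space $(D^\N \times \Omega, \mu_\infty \times \Prob)$, on which the Rademacher variables $\varepsilon_k$ (living on the first factor) are independent of the $\sigma$-fields $\mathcal{A}_k$ (living on the second factor).

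First I would fix $N$, set $g = \sum_{k=0}^N \varepsilon_k F_k$, and replace each $F_k$ by $\E_N F_k$, where $\E_N = \E^{\mathcal{A}_N}$. This costs nothing on the left-hand side, since $\E_k \E_N = \E_k$ for $k \le N$ gives $\E_k(\E_N F_k) = \E_k F_k$; and it only decreases the right-hand side, since $\sum_k \varepsilon_k \E_N F_k$ is the image of $g$ under the conditional expectation with respect to $\mathcal{A}_N$ (tensored with the identity on the $\varepsilon$-factor), which is a contraction of $L_s(\mu_\infty \times \Prob; X)$. Hence we may assume that every $F_k$ is $\mathcal{A}_N$-measurable.

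Next I would interlace the two filtrations: on the product space define $\mathcal{G}_{2k} = \mathcal{A}_k \vee \sigma(\varepsilon_0, \dots, \varepsilon_{k-1})$ and $\mathcal{G}_{2k+1} = \mathcal{A}_k \vee \sigma(\varepsilon_0, \dots, \varepsilon_k)$, and let $(M_j)$ be the Doob martingale of $g$ for this filtration. A short computation using the independence and the mean-zero property of $\varepsilon_k$ shows that $M_0 = 0$, that the odd differences are exactly $d_{2k+1} = M_{2k+1} - M_{2k} = \varepsilon_k \E_k F_k$, and—because each $F_k$ is now $\mathcal{A}_N$-measurable—that $M_{2N+1} = \sum_k \varepsilon_k \E_N F_k = g$. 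In particular $\sum_k \varepsilon_k \E_k F_k = \sum_{j \text{ odd}} d_j$ and $\sum_j d_j = g$. Writing $\epsilon_j = +1$ for $j$ odd and $\epsilon_j = -1$ for $j$ even, one has $\sum_{j \text{ odd}} d_j = \tfrac12\big(\sum_j d_j + \sum_j \epsilon_j d_j\big)$, so the \text{UMD} inequality applied to the martingale difference sequence $(d_j)$ yields $\big\| \sum_k \varepsilon_k \E_k F_k \big\|_{L_s} \le \tfrac{1+C_s(X)}{2}\,\|g\|_{L_s} \le C_s(X)\,\|g\|_{L_s}$, using $C_s(X) \ge 1$. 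Combined with $\|g\|_{L_s} \le \big\|\sum_k \varepsilon_k F_k\big\|_{L_s}$ from the reduction step, this is the claim.

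The main obstacle I anticipate is organizational rather than analytic: choosing the interlaced filtration so that the odd differences emerge exactly as $\varepsilon_k \E_k F_k$ while the terminal value is $g$. This is precisely what forces the preliminary passage to $\mathcal{A}_N$-measurable $F_k$, since otherwise the Doob martingale terminates at $\sum_k \varepsilon_k \E_N F_k$ rather than at $g$; and care must be taken that the $\varepsilon_k$ are genuinely independent of $\mathcal{A}_k$, so that the even differences carry none of the $\varepsilon_k \E_k F_k$ mass.
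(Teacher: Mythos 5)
Your proof is correct and takes essentially the same route as the paper: both arguments interlace the filtration $(\mathcal{A}_k)$ with the $\sigma$-algebras $\sigma(\varepsilon_0,\dots,\varepsilon_k)$ on the product space and identify $\sum_k \varepsilon_k \E_k(F_k)$ as a martingale transform (a selection of alternate differences) of the martingale generated by $\sum_k \varepsilon_k F_k$, then apply the $\text{UMD}$ inequality. The only cosmetic differences are that you pre-condition each $F_k$ on $\mathcal{A}_N$ so that the Doob martingale terminates exactly at $g$, where the paper instead absorbs this via contractivity of the final conditional expectation, and that you use the averaging bound $\tfrac{1+C_s(X)}{2}\le C_s(X)$ where the paper invokes its extreme-point remark allowing coefficients in $[-1,1]$.
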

\begin{proof}
Let $f = \sum_k \varepsilon_k F_k$ and  $f' = \sum_k \varepsilon_k \E_k (F_k)$. Then if $\mathcal{C}_{2j} = \mathcal{A}_j \otimes \sigma(\varepsilon_0, \cdots, \varepsilon_j)$ and $\mathcal{C}_{2j-1} = \mathcal{A}_j \otimes \sigma(\varepsilon_0, \cdots, \varepsilon_{j-1})$, we have $$f' = \sum_j (\E^{\mathcal{C}_{2j}} - \E^{\mathcal{C}_{2j-1}})(f).$$ Indeed, $\E^{\mathcal{C}_{2j}} (f) = \sum_0^j \varepsilon_k \E_j(F_k)$ and $\E^{\mathcal{C}_{2j-1}} (f) = \sum_0^{j-1} \varepsilon_k \E_j (F_k)$. Hence $(\E^{\mathcal{C}_{2j}} -\E^{\mathcal{C}_{2j-1}})( f ) = \varepsilon_j \E_j (F_j)$. It follows (see the next remark) that $$\| f' \|_{L_s(\mu_\infty \times \Prob; X)} \leq C_s(X) \| f \|_{L_s(\mu_\infty \times \Prob; X)},$$ whence \eqref{stein_umd}.
\end{proof}

\begin{rem}\label{extreme_point}
By an extreme point argument, we have $$\sup_{-1\le \alpha_k \le 1} \| \sum_{k=0}^n \alpha_k df_k \|_{L_s(X)} = \sup_{\varepsilon_k \in \{-1, 1\}} \| \sum_{k=0}^n \varepsilon_k df_k \|_{L_s(X)} .$$ Hence we have $$\sup_{-1\le \alpha_k \le 1} \| \sum_{k=0}^n \alpha_k df_k \|_{L_s(X)} \le C_s(X) \| \sum_{k=0}^n df_k \|_{L_s(X)}
.$$
\end{rem}

\begin{prop}\label{unconditional}
Let $X$ be a $\text{UMD}$ space. Assume that $\{x_i\}_{i \in I}$ is a 1-unconditional basic sequence in $X$. Then for any $1 < s <\infty$, any finite sequence of functions $(\theta_k)_{k \geq 0}$ in $L_s(\Omega, \Prob)$ and any filtration $\mathcal{A}_0 \subset \mathcal{A}_1 \subset \cdots \subset \mathcal{A}_n \subset \cdots$ on $(\Omega, \Prob)$, we have \begin{eqnarray}\label{crucial} \Big \|\sum_k \E_k(\theta_k)  x_{i_k} \Big \|_{L_s(\Omega, \Prob; X)} \leq C_s(X) \Big \| \sum_k \theta_k  x_{i_k}\Big \|_{L_s(\Omega, \Prob; X)}.\end{eqnarray}
\end{prop}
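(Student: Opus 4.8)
The plan is to derive \eqref{crucial} directly from the Stein inequality (Theorem \ref{stein}), using the $1$-unconditionality of $\{x_i\}$ to insert and then erase an auxiliary Rademacher sequence. The point is that the statement is essentially a specialization of Theorem \ref{stein} to the $X$-valued functions built from the scalar functions $\theta_k$ and the fixed vectors $x_{i_k}$.

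First I would apply Theorem \ref{stein} to the particular choice $F_k := \theta_k\, x_{i_k}$. Since each $x_{i_k}$ is a fixed vector, independent of the variable $\omega \in \Omega$, the conditional expectation acts only on the scalar factor, so that $\E_k(F_k) = \E_k(\theta_k)\, x_{i_k}$. With this substitution, \eqref{stein_umd} becomes
$$\Big\|\sum_k \varepsilon_k \E_k(\theta_k)\, x_{i_k}\Big\|_{L_s(\mu_\infty \times \Prob; X)} \le C_s(X) \Big\|\sum_k \varepsilon_k \theta_k\, x_{i_k}\Big\|_{L_s(\mu_\infty \times \Prob; X)}.$$

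Next I would remove the Rademacher sequence from both sides. Fix $(\varepsilon, \omega) \in D^\N \times \Omega$. The numbers $\E_k(\theta_k)(\omega)$ and $\theta_k(\omega)$ are scalars, so by $1$-unconditionality of $\{x_i\}$ the signs $\varepsilon_k \in \{-1, 1\}$ leave the norm in $X$ unchanged:
$$\Big\|\sum_k \varepsilon_k \E_k(\theta_k)(\omega)\, x_{i_k}\Big\|_X = \Big\|\sum_k \E_k(\theta_k)(\omega)\, x_{i_k}\Big\|_X,$$
and likewise with $\theta_k$ in place of $\E_k(\theta_k)$. Thus both integrands are independent of $\varepsilon$; since $\mu_\infty$ is a probability measure, integrating over $D^\N$ is trivial, and the $L_s(\mu_\infty \times \Prob; X)$ norms collapse to the $L_s(\Prob; X)$ norms appearing in \eqref{crucial}. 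Combining the two displays gives the announced inequality.

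There is no serious obstacle here, as the proposition is really a corollary of Theorem \ref{stein}. The only points that require care are the observation that conditional expectation passes through the constant vectors $x_{i_k}$, so that Stein's inequality applies verbatim to $F_k = \theta_k x_{i_k}$, and the verification that $1$-unconditionality is precisely what allows the auxiliary Rademacher signs to be discarded pointwise. Since the sums are finite, all integrability questions are automatic.
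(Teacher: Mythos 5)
Your proof is correct and follows essentially the same route as the paper: both apply Theorem \ref{stein} to $F_k = \theta_k\, x_{i_k}$, observe that $\E_k(F_k) = \E_k(\theta_k)\, x_{i_k}$, and then use the $1$-unconditionality of $\{x_i\}$ pointwise to erase the Rademacher signs, so that the $L_s(\mu_\infty \times \Prob; X)$ norms reduce to the $L_s(\Omega,\Prob; X)$ norms in \eqref{crucial}. No gaps.
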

\begin{proof}
For any $i_k$'s, consider the sequence $ (F_k)_{k \geq 0}$ in $L_s(\Omega, \Prob; X)$ defined by $F_k (w)= \theta_k(w) x_{i_k}$. Then $\E_k(F_k) = \E_k(\theta_k)  x_{i_k}$. By the 1-unconditionality of $\{ x_i \}_{i \in I}$, for any fixed choice of signs $\varepsilon_k \in \{ -1, 1\}$ and $w \in \Omega$, we have $$\Big \|\sum_k \varepsilon_k F_k(w) \Big \|_X = \Big \| \sum_k \varepsilon_k \theta_k(w)  x_{i_k} \Big \|_X = \Big \|\sum_k \theta_k(w) x_{i_k} \Big \|_X.$$ It follows that $$ \Big \| \sum_k \varepsilon_k F_k \Big \|_{L_s(\mu_\infty \times  \Prob; X)} = \Big \| \sum_k \theta_k  x_{i_k}\Big \|_{L_s(\Omega, \Prob; X)}.$$ Similarly, we have $$  \Big \| \sum_k \varepsilon_k \E_k(F_k) \Big \|_{L_s(\mu_\infty \times  \Prob; X)} = \Big \| \sum_k \E_k(\theta_k) x_{i_k}\Big \|_{L_s(\Omega, \Prob; X)} .$$ By these equalities, \eqref{crucial} follows from \eqref{stein_umd}. 
\end{proof}

Let $X$ be as in Proposition \ref{unconditional}, $\{ x_i \}_{i \in I}$ is a 1-unconditional basic sequence in $X$. Assume that $\theta_k \in L_\infty(\Omega, \Prob)$. By an application of the contractive inclusions $L_\infty(\Omega, \Prob; X) \subset L_s(\Omega, \Prob; X) \subset L_1(\Omega, \Prob; X)$, we have \begin{eqnarray}\label{S(X)} \Big \|\sum_k \E_k(\theta_k) x_{i_k} \Big \|_{L_1(\Omega, \Prob; X)} \leq C_s(X) \Big \| \sum_k \theta_k  x_{i_k}\Big \|_{L_\infty(\Omega, \Prob; X)}. \end{eqnarray} Hence \begin{eqnarray}\label{S(X)_C_s} S(X; \{ x_i\}) \leq C_s(X)\end{eqnarray} for all $1 < s < \infty$.

\begin{thm}\label{E(F)}
Let $E$ be a Banach space with a 1-unconditional basis $\{ e_i : i \in I \}$, let $F$ be another Banach space. By definition, $E(F)$ is the completion of the algebraic tensor product $E \otimes F$ under the norm defined as follows: if $x = \sum_i e_i \otimes x_i \in E \otimes F$, where $(x_i)$ is a finite supported sequence in $F$, then $$\| x \|_{E(F)} : = \Big \| \sum_i e_i \big\| x_i \big\|_F \Big\|_E .$$ For any fixed family of vectors $\{f_j: j\in J\}$ in $F$, consider the family of vectors $\{e_i \otimes f_j: i\in I, j \in J\}$. Then we have $$S(E(F)) \geq S(E)S(F),$$ where $S(E(F))$, $S(E)$ and $S(F)$ are defined with respect to the mentioned families of vectors respectively.
\end{thm}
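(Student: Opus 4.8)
The plan is to read off $S(E(F))$, $S(E)$, $S(F)$ as suprema of ratios and to feed $S(E(F))$ a single combined example manufactured from near-optimal examples for $E$ and for $F$. Recall that $S(X;\{x_i\})$ is by definition the supremum, over all admissible data, of the quotient of the two sides of \eqref{definition_S(X)}. So fix $\epsilon>0$ and choose near-optimal data for $E$: a probability space $(\Omega_1,\Prob_1)$ with filtration $(\mathcal{A}_k)_{k=0}^M$, distinct indices $i_0,\dots,i_M$ and functions $\alpha_k\in L_\infty(\Omega_1)$, and likewise near-optimal data for $F$: $(\Omega_2,\Prob_2)$, $(\mathcal{B}_l)_{l=0}^L$, distinct $j_0,\dots,j_L$, $\beta_l\in L_\infty(\Omega_2)$. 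Write $a_k:=\E^{\mathcal{A}_k}(\alpha_k)$, $b_l:=\E^{\mathcal{B}_l}(\beta_l)$, and set $m_E:=\|\sum_k a_k e_{i_k}\|_{L_1(\Omega_1;E)}$, $N_E:=\|\sum_k \alpha_k e_{i_k}\|_{L_\infty(\Omega_1;E)}$, with $m_F,N_F$ the analogues for $F$, so that $m_E/N_E\ge S(E)-\epsilon$ and $m_F/N_F\ge S(F)-\epsilon$. The goal is to build data for $E(F)$, using the vectors $e_{i_k}\otimes f_{j_l}$ (distinct since the $i_k$ and the $j_l$ are), whose ratio is at least $(m_E/N_E)(m_F/N_F)$; letting $\epsilon\to 0$ then yields $S(E(F))\ge S(E)S(F)$.

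The engine of the argument is the defining identity for the $E(F)$-norm, namely $\|\sum_k e_{i_k}\otimes y_k\|_{E(F)}=\|\sum_k \|y_k\|_F\, e_{i_k}\|_E$ for $y_k\in F$. If one naively works on $\Omega_1\times\Omega_2$ with $\theta_{(k,l)}=\alpha_k\otimes\beta_l$, then for the term indexed $(k,l)$ the vector $y_k$ is a fixed scalar multiple of $\sum_l(\cdot)f_{j_l}$ independent of $k$, and this identity together with the $1$-unconditionality of $\{e_i\}$ makes both sides of \eqref{definition_S(X)} split as a product of the corresponding $E$- and $F$-quantities, giving the ratio exactly $(m_E/N_E)(m_F/N_F)$. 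The obstruction — and the only real difficulty — is that the natural filtration $\mathcal{A}_k\otimes\mathcal{B}_l$ cannot be made increasing along any total order of the grid $\{0,\dots,M\}\times\{0,\dots,L\}$, since monotonicity of $\mathcal{A}_k\otimes\mathcal{B}_l$ would force both coordinates to be non-decreasing simultaneously.

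I would remove this obstruction by decoupling the inner $F$-examples across the outer steps. Work on $\Omega=\Omega_1\times\prod_{k=0}^M\Omega_2^{(k)}$, a product of $\Omega_1$ with $M+1$ independent copies of $\Omega_2$, read the $F$-data in the $k$-th outer block off the $k$-th copy, i.e. $\theta_{(k,l)}(\omega)=\alpha_k(\omega_1)\,\beta_l(\omega_2^{(k)})$, and order the pairs lexicographically in $(k,l)$. Denoting by $\mathcal{B}_l^{(k)}$ and $\mathcal{F}_2^{(k)}$ the copies of $\mathcal{B}_l$ and of the full $\sigma$-algebra on $\Omega_2^{(k)}$, take
$$\mathcal{C}_{(k,l)}=\mathcal{A}_k\otimes\Big(\bigotimes_{m<k}\mathcal{F}_2^{(m)}\Big)\otimes\mathcal{B}_l^{(k)}\otimes\Big(\bigotimes_{m>k}\{\emptyset,\Omega_2^{(m)}\}\Big),$$
that is, full information on the finished copies $m<k$, level-$l$ information on the current copy $k$, and none on the future copies $m>k$. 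A direct check shows this is genuinely increasing — the only delicate transition, from $(k,L)$ to $(k+1,0)$, enlarges every coordinate — and since $\theta_{(k,l)}$ depends only on $(\omega_1,\omega_2^{(k)})$, one gets $\E^{\mathcal{C}_{(k,l)}}(\theta_{(k,l)})=a_k(\omega_1)\,b_l(\omega_2^{(k)})$.

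It remains to estimate the two sides. For the right-hand side, the grouping identity gives pointwise $\|\sum_{k,l}\theta_{(k,l)}(e_{i_k}\otimes f_{j_l})\|_{E(F)}=\|\sum_k |\alpha_k(\omega_1)|\,\|\psi_k\|_F\, e_{i_k}\|_E$ with $\psi_k=\sum_l\beta_l(\omega_2^{(k)})f_{j_l}$; bounding $\|\psi_k\|_F\le N_F$ a.e. and using monotonicity of the norm in the moduli of the coefficients (from $1$-unconditionality) yields the pointwise, hence $L_\infty(\Omega;E(F))$, bound $N_E N_F$. For the left-hand side the same grouping gives $\|\sum_k |a_k(\omega_1)|\,\xi_k\, e_{i_k}\|_E$ with $\xi_k:=\|\sum_l b_l(\omega_2^{(k)})f_{j_l}\|_F$, where the $\xi_k$ are independent of each other and of $\omega_1$, each with mean $m_F$. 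Since $(\xi_k)_k\mapsto\|\sum_k |a_k(\omega_1)|\,\xi_k\, e_{i_k}\|_E$ is convex, Jensen's inequality in the $\xi$-variables (for fixed $\omega_1$) followed by integration in $\omega_1$ and $1$-unconditionality gives the lower bound $m_E m_F$ for the $L_1(\Omega;E(F))$-norm. Dividing, the ratio of the combined example is at least $(m_E/N_E)(m_F/N_F)\ge (S(E)-\epsilon)(S(F)-\epsilon)$, and $\epsilon\to0$ finishes the proof. The one genuinely nontrivial point is the passage from the clean but filtration-incompatible single-copy computation to the decoupled one; the compensating convexity (Jensen) step happens to point in the favorable direction precisely because we only seek a lower bound on $S(E(F))$.
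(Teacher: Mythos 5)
Your proposal is correct and is essentially the paper's own proof: the paper likewise decouples the inner $F$-example by taking a product of $\Omega'$ with independent copies of $\Omega_0$ (one per outer index), uses the same lexicographically ordered filtration (full information on finished copies, level-$n$ information on the current copy, trivial on future ones), and obtains the $L_1$ lower bound via the same Jensen/convexity step combined with $1$-unconditionality. The only differences are cosmetic (finitely many copies versus countably many, and your explicit motivation for why the naive single-copy filtration fails).
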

\begin{proof}
From the definition, for any $\varepsilon > 0$, there exist finite number of distinct indices $\{ i_k: 1\le k \le N_1\} \subset I$ and $\{ j_n: 1 \le n \le N_2\} \subset J$, and there exist functions $\theta_k \in L_\infty(\Omega', \Prob'), 1 \le k \le N_1$ and functions $\xi_n \in L_\infty(\Omega_0, \Prob_0), 1 \le n \le N_2$ satisfying $$\|\sum_k \theta_k  e_{i_k}\|_{L_\infty(\Omega',\Prob'; E)} \le 1$$ and $$\|\sum_n \xi_n f_{j_n}\|_{L_\infty(\Omega_0, \Prob_0; F)}\le 1$$ such that $$ \Big \| \sum_k\E^{\mathcal{A}_k}(\theta_k) e_{i_k} \Big \|_{L_1(\Omega', \Prob'; E)} \geq S(E) -\varepsilon$$ and $$\Big \| \sum_n \E^{\mathcal{B}_n}(\xi_n)  f_{j_n} \Big \|_{L_1(\Omega_0, \Prob_0; F)} \geq S(F) -\varepsilon.$$ Let $(\Omega, \Prob) = (\Omega' \times \Omega_0^{\N}, \Prob' \otimes \Prob_0^{\otimes \N})$, the general element in $\Omega$ will be denoted by $ w = (w', (w_l)_{l\geq 0})$. Consider the $\sigma$-algebras $\mathcal{F}_{k, n}$ defined on $(\Omega, \Prob)$ by $$\mathcal{F}_{k,n} : = \mathcal{A}_k \otimes \underbrace{\mathcal{B}_\infty \otimes \cdots \otimes \mathcal{B}_\infty}_{k-1 \text{ times }} \otimes \mathcal{B}_n \otimes \mathcal{C}_{\geq k +1},$$ where $\mathcal{B}_\infty = \sigma(\mathcal{B}_n: n \geq 0)$ is a $\sigma$-algebra on $(\Omega_0, \Prob_0)$, $\mathcal{B}_0$ is assumed to be trivial and $\mathcal{C}_{\geq k+1}$ is the trivial $\sigma$-algebra on $(\Omega_0^{\N_{\geq {k+1}}}, \Prob_0^{\N_{\geq k+1}})$. It is easy to check that $\mathcal{F}_{k,n}$ is a filtration with respect to the lexigraphic order, i.e. if $(k, n) < (k',n')$ (that is $k < k'$ or $k = k'$ but $n < n'$), then $\mathcal{F}_{k,n} \subset \mathcal{F}_{k', n'}$. 

Now let us define $h : \Omega \rightarrow E(F)$ by $$h(w) = h(w', (w_l))  = \sum_{k, n} \theta_k(w') \xi_n(w_k) e_{i_k} \otimes f_{j_n}.$$ Let $h_{k, n} (w) = \theta_k(w') \xi_n(w_k)$, then $h = \sum_{k,n} h_{k,n} e_{i_k} \otimes f_{j_n}$. Clearly, we have \begin{eqnarray}\label{cond_exp_h}\E^{\mathcal{F}_{k,n}} (h_{k,n})(w)  = \Big[\E^{\mathcal{A}_k} (\theta_k)\Big] (w') \Big[\E^{\mathcal{B}_n} (\xi_n)\Big](w_k) \quad a.e.. \end{eqnarray} By the 1-unconditionality of $\{ e_i: i \in I \}$, for a.e. $w \in \Omega$, we have \begin{eqnarray*}\| h(w) \|_{E(F)} & = & \Big \| \sum_{k,n} \theta_k(w') \xi_n(w_k) e_{i_k} \otimes f_{j_n} \Big\|_{E(F)}\\ & = & \Big \| \sum_k e_{i_k} \big \| \sum_n \theta_k(w') \xi_n (w_k) f_{j_n} \big \|_F \Big \|_E \\ & = &\Big \| \sum_k e_{i_k} | \theta_k(w')| \big \| \sum_n  \xi_n (w_k) f_{j_n} \big \|_F \Big \|_E  \\ & \leq & \Big \| \sum_k e_{i_k} | \theta_k(w')| \Big \|_E  = \Big \| \sum_k e_{i_k}  \theta_k(w') \Big \|_E  \leq 1.\end{eqnarray*} Hence $\| h \|_{L_\infty(\Omega, \Prob; E(F))} \leq 1$. If we denote $$\widetilde{h} = \sum_{k,n} \E^{\mathcal{F}_{k,n}} (h_{k,n}) e_{i_k} \otimes f_{j_n},$$ then by \eqref{cond_exp_h}, \begin{eqnarray*} \| \widetilde{h}(w) \|_{E(F)} & = &  \Big \| \sum_k e_{i_k} |\E^{\mathcal{A}_k}(\theta_k)(w')| \big \| \sum_n \E^{\mathcal{B}_n}(\xi_n)(w_k) f_{j_n} \big\|_F \Big \|_E. \end{eqnarray*} By Jensen's inequality, we have \begin{eqnarray*}& & \int  \Big \| \sum_k e_{i_k} |\E^{\mathcal{A}_k}(\theta_k)(w')| \big \| \sum_n \E^{\mathcal{B}_n}(\xi_n)(w_k) f_{j_n} \big\|_F \Big \|_E d \Prob_0^{\otimes \N} ((w_l)) \\ & \geq & \Big \| \int \sum_k e_{i_k} |\E^{\mathcal{A}_k}(\theta_k)(w')| \big \| \sum_n \E^{\mathcal{B}_n}(\xi_n)(w_k) f_{j_n} \big\|_F  d \Prob_0^{\otimes \N} ((w_l)) \Big \|_E \\ & = & \Big\| \sum_k e_{i_k} | \E^{\mathcal{A}_k}(\theta_k)(w')| \Big \|_E \cdot \Big\| \sum_n \E^{\mathcal{B}_n}(\xi_n) f_{j_n} \Big \|_{L_1(\Omega_0, \Prob_0; F)} \\ & = &\Big\| \sum_k e_{i_k}  \E^{\mathcal{A}_k}(\theta_k)(w') \Big \|_E \cdot \Big\| \sum_n \E^{\mathcal{B}_n}(\xi_n) f_{j_n} \Big \|_{L_1(\Omega_0, \Prob_0; F)} .\end{eqnarray*} Note that in the last equality,  we used the 1-unconditionality assumption on $\{ e_i: i \in I\}$. By integrating both sides with respect to $\int d \Prob'(w')$, we get \begin{eqnarray*} & & \Big \| \sum_{k, n} \E^{\mathcal{F}_{k,n}} (h_{k,n}) e_{i_k} \otimes f_{j_n} \Big \|_{L_1(\Omega, \Prob; E(F))} \\ & \geq &  \Big\| \sum_k \E^{\mathcal{A}_k}(\theta_k) e_{i_k} \Big \|_{L_1(\Omega', \Prob'; E) }\cdot \Big\| \sum_n \E^{\mathcal{B}_n}(\xi_n) f_{j_n} \Big \|_{L_1(\Omega_0, \Prob_0; F)} \\ & \geq & (S(E)-\varepsilon)(S(F) -\varepsilon).\end{eqnarray*} Therefore $S(E(F)) \geq (S(E)-\varepsilon)(S(F) -\varepsilon)$. Since $\varepsilon > 0$ is arbitrary, it follows that $S(E(F)) \geq S(E)S(F)$ as desired.
\end{proof}

\begin{rem}
If $E$ is a Banach lattice which is $p$-convex and $q$-concave $($see \cite{Classical_Banach_2} for the details$)$ with $1 \leq p \leq q \leq \infty$ and $F$ is a Banach space. Then the preceding proof is valid with $S_{q, p}(E)$ and $S_{q, p}(F)$ defined using \eqref{definition_S(X)} with $L_p$-norm on the left hand side and $L_q$-norm on the right hand side.
\end{rem}

\begin{rem}
Let $1\leq p < q \leq \infty$. If we define $C_{q, p}(X)$ as the best constant $C$ in \eqref{definition_umd} with $L_p$-norm on the left hand side and $L_q$-norm on the right hand side, it is well-known that $X$ is in the $\text{UMD}$ class if and only if $C_{q, p}(X) < \infty$. The preceding argument shows that under the same assumption of Theorem \ref{E(F)}, we have $C_{\infty, 1}(E(F)) \geq S(E) C_{\infty, 1}(F)$. Moreover, if $E$ is $p$-convex and $q$-concave we have $C_{q, p}(E(F)) \geq S_{q, p}(E) C_{q, p}(F)$.
\end{rem}

\begin{lem}\label{baselem}
Suppose that $1 \leq p \ne q \leq \infty$. If $E_1 = \ell_p^{(2)}(\ell_q^{(2)})$, then $$S(E_1) \geq c(p,q) > 1.$$
\end{lem}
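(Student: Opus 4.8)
The plan is to realize the projection $P$ of Proposition~\ref{mainlem} inside a single instance of the defining inequality \eqref{definition_S(X)}. I identify $E_1 = \ell_p^{(2)}(\ell_q^{(2)})$ with $L_p(\mu; \ell_q^{(2)})$ and write its natural $1$-unconditional basis as $\{e_{ij}: i,j \in \{1,2\}\}$, where $i$ is the outer ($\ell_p$) index and $j$ the inner ($\ell_q$) index. Since $c(p,q) = \|P\|$, for every $\varepsilon > 0$ I can fix a pair $(f,g) \in L_p(\mu; \ell_q^{(2)})$, say $f = (f_1, f_2)$ and $g = (g_1, g_2)$, with $\|(\E f, g)\| \geq (c(p,q) - \varepsilon)\,\|(f,g)\|$; it then suffices to exhibit a configuration whose $S$-ratio equals $\|(\E f, g)\|/\|(f,g)\|$.

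First I take $\Omega = D$ with $\Prob = \mu$, and let $R$ denote the block-swap on $E_1$ interchanging the two outer coordinates. I define the right-hand function $\phi = \sum_k \theta_k x_{i_k}$ by prescribing its four coefficients as functions on $\Omega$, so that $\phi(-1) = (f,g)$ and $\phi(1) = R(f,g)$. Concretely, with $x_{i_0} = e_{11}, x_{i_1} = e_{21}$ (the two $f$-slots) and $x_{i_2} = e_{12}, x_{i_3} = e_{22}$ (the two $g$-slots), the coefficient of $e_{11}$ takes values $f_1, f_2$ at $\mp 1$, that of $e_{21}$ takes values $f_2, f_1$, and symmetrically $g_1, g_2$ and $g_2, g_1$ for the $g$-slots. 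Because $R$ merely permutes the two equal-mass blocks, $\|\phi(-1)\|_{E_1} = \|\phi(1)\|_{E_1} = \|(f,g)\|$, so the right-hand side of \eqref{definition_S(X)} equals $\|(f,g)\|$ exactly.

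Next I choose the filtration so that the two $f$-slots are fully averaged while the $g$-slots are untouched: set $\mathcal{A}_0 = \mathcal{A}_1 = \{\emptyset, \Omega\}$ and $\mathcal{A}_2 = \mathcal{A}_3 = 2^{\Omega}$, which is increasing. Then $\E^{\mathcal{A}_0}\theta_0 = \E^{\mathcal{A}_1}\theta_1 = \E f$ whereas $\theta_2, \theta_3$ are unchanged, so the left-hand integrand $\psi = \sum_k \E^{\mathcal{A}_k}(\theta_k)\, x_{i_k}$ satisfies $\psi(-1) = (\E f, g)$ and $\psi(1) = R(\E f, g)$. Since $\E f$ is the same scalar in both blocks, $R$ again preserves the norm, so $\|\psi(-1)\|_{E_1} = \|\psi(1)\|_{E_1} = \|(\E f, g)\|$ and the left-hand side equals $\|(\E f, g)\|$. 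Dividing, the configuration realizes the ratio $\|(\E f, g)\|/\|(f,g)\| \geq c(p,q) - \varepsilon$, and letting $\varepsilon \to 0$ gives $S(E_1) \geq c(p,q)$, which is $> 1$ by \eqref{constant}.

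The point that requires care, and dictates the whole arrangement, is that the $L_\infty$-norm on the right of \eqref{definition_S(X)} is a supremum over $\Omega$, so a naive averaging of $f$ would inflate it. The device is to let $\phi$ range only over the two norm-preserving block-permutations of $(f,g)$: the very swap $R$ that keeps $\|\phi(\cdot)\|_{E_1}$ constant (controlling the right-hand side) is exactly what makes each fixed $f$-slot run through both values $f_1, f_2$, so that full averaging reconstitutes $\E f$, while the untouched $g$-slots reassemble into a norm-preserving copy of $(\E f, g)$. The only computation is the verification of these two norm identities for all $1 \leq p \neq q \leq \infty$, including the endpoint exponents, and it reduces entirely to the block-swap invariance of the $L_p(\mu; \ell_q^{(2)})$ norm.
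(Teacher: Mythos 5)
Your proof is correct and takes essentially the same route as the paper: the paper's proof uses the identical block-swap configuration (its test function places $(a(\varepsilon),b(\varepsilon))$ and $(a(-\varepsilon),b(-\varepsilon))$ in the two outer $\ell_p$-slots) together with the same trivial-then-full filtration, reducing the bound to $\|P\| = c(p,q)$ via Proposition \ref{mainlem}. The only cosmetic difference is that the paper packages the estimate as a lower bound on the norm of an operator $T\colon L_\infty(D;E_1)\to L_1(D;E_1)$ and takes the supremum over all pairs $(a,b)$, whereas you fix a near-optimal pair and compute the ratio directly.
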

\begin{proof}
Denote by $\{e_1^p, e_2^p\}$, $\{e_1^q, e_2^q \}$ the canonical basis of $\ell_p^{(2)}$ and $\ell_q^{(2)}$ respectively .Then $\{ e_1^p\otimes e_1^q, e_1^p \otimes e_2^q, e_2^p \otimes e_1^q, e_2^p\otimes e_2^q \}$ is the canonical 1-unconditional basis of $\ell_p^{(2)}(\ell_q^{(2)})$. Consider the probability space $(D, \mu)$ equipped with the filtration $\{ \phi, D\} \subset \sigma(\varepsilon)$, where $\varepsilon$ is the identity function on $D$. Define a linear map  $T: L_\infty (D; E_1) \rightarrow L_1(D; E_1)$ by setting $$ T\Big[a_{ij}(\varepsilon) e_i^p\otimes e_j^q\Big] = \left\{ \begin{array}{lc} \E(a_{ij}) e_i^p\otimes e_j^q, \text{ if } j = 1 \\ a_{ij}(\varepsilon) e_i^p\otimes e_j^q, \text{ if } j =2 \end{array}.\right. $$ By definition of $S(E_1)$ we have $S(E_1)\geq \| T \|_{L_\infty(D; E_1) \rightarrow L_1(D; E_1)}$. Now for any $a, b$ two scalar functions on $D$ , consider $$f(\varepsilon) = e_1^p \otimes \Big[ a(\varepsilon) e_1^q +  b(\varepsilon) e_2^q \Big]+ e_2^p \otimes \Big[ a(-\varepsilon) e_1^q +  b(-\varepsilon) e_2^q\Big] .$$ Then $$ (Tf)(\varepsilon) = e_1^p \otimes \Big[ \E(a) e_1^q +  b(\varepsilon) e_2^q \Big]+ e_2^p \otimes \Big[ \E(a) e_1^q +  b(-\varepsilon) e_2^q\Big] .$$ If $ p, q$ are both finite, then for any fixed $\varepsilon \in D$, we have \begin{eqnarray*} \| f(\varepsilon) \|_{E_1}\! \! \!  & = &\! \!  \Big \{ (| a(\varepsilon) |^q + | b(\varepsilon) |^q )^{p/q} + (|a(-\varepsilon) |^q + | b(-\varepsilon) |^q)^{p/q}   \Big\}^{1/p} \\ & = &\! \!  \Big \{ (| a(1) |^q + | b(1) |^q )^{p/q} + (|a(-1) |^q + | b(-1) |^q)^{p/q}   \Big\}^{1/p}  \\ & = & \! \! 2^{1/p} \Big \{ \frac{1}{2} (| a(1) |^q + | b(1) |^q )^{p/q} + \frac{1}{2} (|a(-1) |^q + | b(-1) |^q)^{p/q}   \Big\}^{1/p} \\ & = & \! \! 2^{1/p} \Big\{\int (| a(\varepsilon) |^q + | b(\varepsilon) |^q )^{p/q} d \mu (\varepsilon)\Big\}^{1/p} \\ & = & \! \! 2^{1/p} \big\| (a, b) \big\|_{L_p(\mu; \ell_q^{(2)})}.\end{eqnarray*} Similarly, $$\| (Tf )(\varepsilon)\|_{E_1} = 2^{1/p} \big\| (\E a, b) \big\|_{L_p(\mu; \ell_q^{(2)})}.$$ It follows that $$\| f \|_{L_\infty(D; E_1)} = 2^{1/p} \big\| (a, b) \big\|_{L_p(\mu; \ell_q^{(2)})}$$ and $$\| Tf \|_{L_1(D; E_1)} = 2^{1/p} \big\| (\E a, b) \big\|_{L_p(\mu; \ell_q^{(2)})}.$$ Hence \begin{eqnarray}\label{T_norm} \| T \|_{L_\infty(D; E_1) \rightarrow L_1(D; E_1)}  \geq \frac{\| Tf \|_{L_1(D; E_1)}}{\| f \|_{L_\infty(D; E_1)}} = \frac{ \big\| (\E a, b) \big\|_{L_p(\mu; \ell_q^{(2)})}}{\big\| ( a, b) \big\|_{L_p(\mu; \ell_q^{(2)})}}.\end{eqnarray} Similarly, if $q = \infty$ and $p$ is finite, then $$\| f \|_{L_\infty(D; E_1)} = 2^{1/p} \|(a, b)\|_{L_p(\mu; \ell_\infty^{(2)})}$$ and $$\| Tf \|_{L_1(D; E_1)} = 2^{1/p} \big\| (\E a, b) \big\|_{L_p(\mu; \ell_\infty^{(2)})}.$$ If $p = \infty$ and $q$  is finite, then  $\| f \|_{L_\infty(D; E_1)} = \|(a, b)\|_{L_\infty(\mu; \ell_q^{(2)})}$ and $\| Tf \|_{L_1(D; E_1)} = \big\| (\E a, b) \big\|_{L_\infty(\mu; \ell_q^{(2)})}$. Therefore, \eqref{T_norm} holds in full generality. By Proposition \ref{mainlem}, we have $$\| T \|_{L_\infty(D; E_1) \rightarrow L_1(D; E_1)} \geq \| P \| = c(p, q).$$ Hence $S(E_1) \geq c(p, q) > 1$, as announced.
\end{proof}

\begin{rem}
Let $(e_k)_{k\geq 0}$ be the canonical basis of $\ell_p = \ell_p(\N)$, then $S(\ell_p) = 1$. Indeed, if $(\theta_k)_{k \geq 0}$ is a finite sequence of functions, then \begin{eqnarray*} \Big \| \sum_k \E_k(\theta_k) e_k \Big \|_{L_1(\ell_p)} & \leq &\Big \| \sum_k \E_k(\theta_k) e_k \Big \|_{L_p (\ell_p)} = \Big \| (\sum_k | \E_k (\theta_k)|^p)^{1/p} \Big \|_{L_p} \\ & = &   \Big \| \sum_k | \E_k (\theta_k)|^p \Big \|_{L_1}^{1/p} = (\sum_k \big \| \E_k (\theta_k) \big \|_p^p )^{1/p} \\ & \leq &  (\sum_k \big \| \theta_k \big \|_p^p )^{1/p} = \Big \| \sum_k \theta_k e_k \Big \|_{L_p (\ell_p)} \\ & \leq & \Big \| \sum_k \theta_k e_k \Big \|_{L_\infty (\ell_p)} .\end{eqnarray*} 
\end{rem}

\begin{thm}\label{mainthm}
Suppose that $1\leq p, q \leq \infty$. Let $E_1 = \ell_p^{(2)}(\ell_q^{(2)})$ and define by recursion: $E_{n+1} = \ell_p^{(2)}(\ell_q^{(2)}(E_n))$. Then for any $1 < s < \infty$, we have $$C_s(E_n) \geq S(E_n) \geq c(p,q)^n, $$ where $S(E_n)$ is computed with respect to the canonical basis of $E_n$. In particular, if $p \ne q$, then $C_s(E_n)$ has at least an exponential growth with respect to $n$. 
\end{thm}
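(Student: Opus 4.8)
The plan is to deduce this theorem as a quick consequence of the two substantial facts already established, namely the super-multiplicativity of $S$ under the $E(F)$ construction (Theorem~\ref{E(F)}) and the base estimate $S(E_1) \geq c(p,q)$ (Lemma~\ref{baselem}), organised by an induction on $n$. The very first inequality $C_s(E_n) \geq S(E_n)$ requires no new work: since the canonical basis of $E_n$ is $1$-unconditional, it is exactly \eqref{S(X)_C_s} applied to $X = E_n$, valid for every $1 < s < \infty$.

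The key preliminary step I would carry out is the identification $E_{n+1} = E_1(E_n)$, where $E_1 = \ell_p^{(2)}(\ell_q^{(2)})$ is equipped with its canonical $1$-unconditional basis $\{ e_i^p \otimes e_j^q \}$. This is simply the associativity of the iterated norm: unfolding the definition of the $\ell_p^{(2)}(\ell_q^{(2)}(\cdot))$-norm of a general element $\sum_{i,j} e_i^p \otimes e_j^q \otimes x_{ij}$ with $x_{ij} \in E_n$ gives
$$\Big( \sum_i \big( \sum_j \| x_{ij} \|_{E_n}^q \big)^{p/q} \Big)^{1/p},$$
which is exactly the norm defining $E_1(E_n)$ via the basis $\{ e_i^p \otimes e_j^q \}$. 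Moreover, the canonical basis of $E_{n+1}$ is precisely the family $\{ (e_i^p \otimes e_j^q) \otimes f : f \text{ in the canonical basis of } E_n \}$, so the family of vectors used to compute $S(E_{n+1})$ coincides with the family $\{ e_i \otimes f_j \}$ appearing in Theorem~\ref{E(F)} with $E = E_1$, $F = E_n$ and $\{ f_j \}$ the canonical basis of $E_n$.

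With this identification in hand, Theorem~\ref{E(F)} immediately yields $S(E_{n+1}) = S(E_1(E_n)) \geq S(E_1) S(E_n)$. I would then run the induction: the base case is $S(E_1) \geq c(p,q)$ from Lemma~\ref{baselem}, and the inductive step gives $S(E_{n+1}) \geq c(p,q) \cdot c(p,q)^n = c(p,q)^{n+1}$, so that $S(E_n) \geq c(p,q)^n$ for all $n$. Combined with $C_s(E_n) \geq S(E_n)$, this proves the displayed chain of inequalities. Finally, when $p \ne q$, inequality \eqref{constant} gives $c(p,q) > 1$, whence $C_s(E_n) \geq c(p,q)^n$ grows at least exponentially in $n$.

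Given the weight of Theorem~\ref{E(F)} and Lemma~\ref{baselem}, the remaining argument is light, and the only point that needs genuine care is the compatibility asserted in the second paragraph: one must check that the $S$-constant of $E_{n+1}$ computed with respect to its own canonical basis is the same quantity as the $S$-constant of $E_1(E_n)$ computed with respect to $\{ e_i \otimes f_j \}$. Once this bookkeeping is settled, the induction is automatic, and I do not anticipate any further obstacle.
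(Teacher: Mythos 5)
Your proposal is correct and follows essentially the same route as the paper's own proof: apply Theorem~\ref{E(F)} with $E = E_1$ and $F = E_n$ to get $S(E_{n+1}) \geq S(E_1)S(E_n)$, combine with Lemma~\ref{baselem} and induct, then use \eqref{S(X)_C_s} via the $1$-unconditionality of the canonical basis. The only difference is that you make explicit the identification $E_{n+1} = E_1(E_n)$ and the matching of bases, which the paper treats as implicit; this is a harmless (indeed welcome) elaboration, not a different argument.
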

\begin{proof}
By Theorem \ref{E(F)}, $$S(E_{n+1}) \geq S(\ell_p^{(2)}(\ell_q^{(2)})) S(E_n).$$ By Lemma \ref{baselem}, we have $S(E_{n+1}) \geq c(p, q)S(E_n)$. It follows that $S(E_n) \geq c(p,q)^n$. Since the canonical basis of $E_n$ is 1-unconditional, by \eqref{S(X)_C_s}, for any $1 < s < \infty$, we have $C_s(E_n) \geq S(E_n)$.
\end{proof}

The following simple observation shows that the exponential growth of $C_s(E_n)$ is optimal.
\begin{prop}\label{majoration}
Suppose $1 < p \ne q < \infty$. Let $X$ be a Banach space. Define by recursion: $Y_0 =X$ and $Y_{n+1} = L_p(\T; L_q(\T; Y_n))$. Then for all $1 < s < \infty$, there exists $\chi = \chi(p, q, s)$, such that $$C_s(Y_n) \leq \chi^n C_s(X).$$
\end{prop}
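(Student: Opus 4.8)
The plan is to reduce the statement to two standard facts about UMD constants and then to induct on $n$. The first fact is the quantitative exponent-independence of the UMD property: for all $1 < a, b < \infty$ there is a constant $\psi(a,b)$, depending only on $a$ and $b$, such that $C_a(W) \le \psi(a,b)\, C_b(W)$ for every Banach space $W$. The second is a Fubini-type comparison: for every $1 < r < \infty$ and every Banach space $W$ one has $C_r(L_r(\T; W)) \le C_r(W)$.

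I would first dispose of the Fubini comparison. Let $(f_k)$ be an $L_r(\T;W)$-valued martingale relative to a filtration on the underlying probability space $(\Omega,\Prob)$. Using the isometry $L_r(\Omega; L_r(\T;W)) = L_r(\T; L_r(\Omega; W))$, write $f_k = f_k(\omega,\zeta)$; since conditional expectation in the variable $\omega$ commutes with integration in the parameter $\zeta$, for a.e.\ $\zeta \in \T$ the sequence $(f_k(\cdot,\zeta))_k$ is a $W$-valued martingale. Applying the $\text{UMD}_r$ inequality fiberwise for each fixed choice of signs $\varepsilon_k \in \{-1,1\}$ and integrating in $\zeta$ against $m$ gives
\[
\Big\| \sum_k \varepsilon_k df_k \Big\|_{L_r(\Omega; L_r(\T;W))} \le C_r(W) \Big\| \sum_k df_k \Big\|_{L_r(\Omega; L_r(\T;W))}.
\]
As the right-hand side does not depend on the signs, taking the supremum over signs yields $C_r(L_r(\T;W)) \le C_r(W)$.

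Combining the two facts gives the one-step bound: for a fixed $1 < r < \infty$ and any $W$,
\[
C_s(L_r(\T;W)) \le \psi(s,r)\, C_r(L_r(\T;W)) \le \psi(s,r)\, C_r(W) \le \psi(s,r)\psi(r,s)\, C_s(W),
\]
so with $\kappa(r,s) := \psi(s,r)\psi(r,s)$ we have $C_s(L_r(\T;W)) \le \kappa(r,s)\, C_s(W)$ for every Banach space $W$. Applying this twice --- first with $r=p$ and $W = L_q(\T;Y_n)$, and then with $r=q$ and $W = Y_n$ --- yields $C_s(Y_{n+1}) = C_s(L_p(\T; L_q(\T; Y_n))) \le \kappa(p,s)\kappa(q,s)\, C_s(Y_n)$. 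Setting $\chi = \chi(p,q,s) := \kappa(p,s)\kappa(q,s)$ and iterating down to $Y_0 = X$ gives $C_s(Y_n) \le \chi^n C_s(X)$, as claimed.

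The only genuinely nontrivial ingredient is the exponent comparison $\psi(a,b)$, which I would either cite from Burkholder \cite{Burkholder3} or justify as follows. Boundedness of all sign martingale transforms on $L_b(\Omega; W)$ yields, through a Gundy / Calder\'on--Zygmund decomposition, a weak-type $(1,1)$ bound with constant linear in $C_b(W)$; Marcinkiewicz interpolation against the $L_b$ bound then gives the downward estimate $C_a(W) \le c(a,b)\, C_b(W)$ for $1 < a < b$. The upward range $a > b$ follows from the downward range together with the self-duality $C_t(W) = C_{t'}(W^*)$ of the UMD constant (the sign transform is self-adjoint for the martingale-difference pairing): indeed $a > b$ gives $a' < b'$, whence $C_a(W) = C_{a'}(W^*) \le c(a',b')\, C_{b'}(W^*) = c(a',b')\, C_b(W)$. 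This duality-plus-interpolation step is the one place where care is genuinely required; everything else is routine.
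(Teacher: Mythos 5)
Your proposal is correct and follows essentially the same route as the paper: both rest on the exponent-independence of the UMD constant (cited from Burkholder) together with the Fubini-type identity $C_r(L_r(\T;W)) \le C_r(W)$, applied layer by layer and iterated to give a one-step factor $\chi(p,q,s)$. The only differences are cosmetic --- the paper chains the exponents $s \to p \to q \to s$ in a single pass, producing $\chi = \beta(s,p)\beta(p,q)\beta(q,s)$, whereas you apply a generic one-layer lemma twice (yielding four factors instead of three), and you additionally spell out proofs of the two ingredients that the paper simply quotes as known.
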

\begin{proof}
We will use the following well-known fact (see e.g. \cite{Burkholder_geo_UMD, Burkholder1}) about $\text{UMD}$ constants: for any $1< r,s< \infty$, there exist $\alpha(r, s)$ and $\beta(r, s)$ such that for all Banach space $X$, \begin{eqnarray}\label{umd_indep_p}\alpha(r, s) C_s(X) \leq C_r(X) \leq \beta(r,s) C_s(X).\end{eqnarray} We will also use the elementary identity $C_s(L_s(X)) = C_s(X)$. Combining these, we have \begin{eqnarray*} C_s(Y_{n+1})& = &C_s(L_p( L_q( Y_n))) \leq \beta(s, p) C_p(L_p(L_q(Y_n)))\\ & =&  \beta(s,p) C_p(L_q(Y_n)) \leq \beta(s,p) \beta(p, q) C_q(L_q(Y_n)) \\ &= &\beta(s,p)\beta(p,q) C_q(Y_n) \leq \beta(s,p)\beta(p,q) \beta(q,s)C_s(Y_n).\end{eqnarray*} Let $\chi = \beta(s,p)\beta(p,q) \beta(q,s)$, then $C_s(E_n) \leq \chi^n C_s(X).$
\end{proof}

\begin{rem}
Even if one of $p,q$ is infinite or equals to 1, then since $\dim(E_n) = 4^n$, we have $C_s(E_n) \lesssim \sqrt{\dim E_n} = 2^n$. Indeed, the Banach-Mazur distance between $E_n$ and $\ell_2^{\dim E_n}$ is $\le \sqrt{\dim E_n}$ $($cf. e.g.\,\cite{Banach_Mazur}$)$.
\end{rem}


\section{Analytic $\text{UMD}$ constants}\label{analytic_umd}
The main idea in \S\ref{mainresults} can be easily adapted for treating the analytic $\text{UMD}$ property. In this section, all spaces are over $\C$.

Denote the general element in $\T^\N$ be $z = (z_n)_{n\geq 0}$ and let $m_\infty = m^{\otimes \N}$ be the Haar measure on $\T^\N$. Recall the canonical filtration on $(\T^{\N}, m_\infty)$ defined by $$\sigma(z_0) \subset \sigma(z_0, z_1) \subset \cdots \subset \sigma(z_0, z_1, \cdots, z_n)\subset \cdots.$$ From now on, we will denote $\mathcal{G}_n = \sigma(z_0, z_1, \cdots, z_n)$. Recall that $H_s(\T^\N)$ is the subspace of $L_s(\T^\N, m_\infty)$ consisting of limit values of Hardy martingales, i.e. $f \in H_s(\T^\N)$ if and only if $f \in L_s(\T^\N, m_\infty)$ and the associated martingale $(\E^{\mathcal{G}_n} f)_{n \geq 0}$ is a Hardy martingale. For convenience, we always assume $z_0 \equiv 1$ such that $\mathcal{G}_0$ is a trivial $\sigma$-algebra.

\begin{defn}
Let $X$ be a Banach space and let $\{ x_i\}_{i \in I}$ be a family of vectors in $X$. The number $S^a(X; \{ x_i \})$ is defined to be the best constant $C$ such that for any $N\in \N$ and any finite sequence of functions $(\theta_k)_{k=0}^{N}$ in $H_\infty(\T^\N)$, we have $$ \Big \| \sum_k \E^{\mathcal{G}_k} (\theta_k) x_{i_k} \Big \|_{L_1(m_\infty; X)} \leq C \Big \| \sum_k \theta_k x_{i_k} \Big \|_{L_\infty(m_\infty; X)}$$ If there does not exist such constant,  we set $S^a(X; \{x_i\}) = \infty$. 
\end{defn}
If $\{ x_i\}$ is clear from the context, then $S^a(X; \{ x_i \})$ will be simplified as $S^{a}(X)$.

The Stein type inequality still holds in this setting, more precisely, we have
\begin{prop}\label{stein_aumd}
Let $X$ be an $\text{AUMD}$ space. For any $1\leq s < \infty$, let $(F_k)_{k \geq 0}$ be an arbitrary finite sequence in $H_s(\T^\N; X)$. Then we have \begin{eqnarray}\label{aumd_stein_ineq} \Big \| \sum_k \zeta_k \E^{\mathcal{G}_k} (F_k) (z) \Big\|_{L_s( X)} \leq C_s^a(X) \Big \| \sum_k \zeta_k F_k(z) \Big \|_{L_s(X)},\end{eqnarray} where $\zeta = (\zeta_k)_{k \geq 0}$ is an independent copy of $z = (z_k)_{k\geq 0}$ and $L_s(X) = L_s(\T_z^\N \times \T_\zeta^\N, m_\infty \times m_\infty; X)$.
\end{prop}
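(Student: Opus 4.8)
The plan is to transcribe the proof of Theorem~\ref{stein} into the analytic setting, replacing the Rademacher variables $(\varepsilon_k)$ by the independent torus coordinates $(\zeta_k)$, and replacing the abstract filtration by an interleaving of the two canonical Hardy filtrations on $\T_z^\N$ and $\T_\zeta^\N$. Concretely, I would set
\[
g = \sum_k \zeta_k F_k(z), \qquad g' = \sum_k \zeta_k \E^{\mathcal{G}_k}(F_k)(z),
\]
so that the claimed inequality \eqref{aumd_stein_ineq} reads $\|g'\|_{L_s(X)} \le C_s^a(X)\|g\|_{L_s(X)}$. On the product $\T_z^\N \times \T_\zeta^\N$ I would introduce the filtration obtained by revealing the coordinates in the interleaved order $z_0, \zeta_0, z_1, \zeta_1, \dots$, namely
\[
\mathcal{C}_{2j} = \mathcal{G}_j \otimes \sigma(\zeta_0, \dots, \zeta_j), \qquad \mathcal{C}_{2j-1} = \mathcal{G}_j \otimes \sigma(\zeta_0, \dots, \zeta_{j-1}),
\]
where $\mathcal{G}_j = \sigma(z_0, \dots, z_j)$.

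Using the independence of $z$ and $\zeta$ together with $\int_\T \zeta\, dm(\zeta) = 0$, a direct computation gives $\E^{\mathcal{C}_{2j}}(g) = \sum_{k=0}^{j} \zeta_k \E^{\mathcal{G}_j}(F_k)$ and $\E^{\mathcal{C}_{2j-1}}(g) = \sum_{k=0}^{j-1}\zeta_k\E^{\mathcal{G}_j}(F_k)$, whence $(\E^{\mathcal{C}_{2j}}-\E^{\mathcal{C}_{2j-1}})(g) = \zeta_j\E^{\mathcal{G}_j}(F_j)$ and therefore $g' = \sum_j(\E^{\mathcal{C}_{2j}} - \E^{\mathcal{C}_{2j-1}})(g)$. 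In other words $g'$ is the martingale transform of $g$ that keeps the even-indexed differences (multiplier $1$) and discards the odd-indexed ones (multiplier $0$); since $0,1 \in [-1,1]$, this is of the admissible form for the extreme-point argument.

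The heart of the matter is to check that $(\E^{\mathcal{C}_m}(g))_m$ is genuinely a \emph{Hardy} martingale for this interleaved filtration, so that the $\text{AUMD}$ property applies. The even difference $\zeta_j\E^{\mathcal{G}_j}(F_j)$ is of the form $(\text{function independent of }\zeta_j)\times \zeta_j$, hence carries only the frequency $+1$ in the freshly revealed variable $\zeta_j$; this is precisely where multiplication by $\zeta_k$, rather than by a sign, is decisive. The odd difference equals $\sum_{k\le j}\zeta_k(\E^{\mathcal{G}_{j+1}} - \E^{\mathcal{G}_j})(F_k)$, and here each $(\E^{\mathcal{G}_{j+1}} - \E^{\mathcal{G}_j})(F_k)$ is the $(j+1)$-st Hardy-martingale difference of $F_k \in H_s(\T^\N; X)$, hence analytic (positive-frequency) in $z_{j+1}$, while the factors $\zeta_k$ with $k \le j$ do not involve $z_{j+1}$; so the whole difference is again analytic in the newly revealed coordinate. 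After relabeling the interleaved coordinates as the canonical coordinates of a single copy of $\T^\N$, the martingale $(\E^{\mathcal{C}_m}(g))_m$ falls under the scope of the $\text{AUMD}$ definition.

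Finally, I would invoke the $\text{AUMD}$ inequality in the form $\sup_{-1\le\alpha_m\le1}\|\sum_m\alpha_m\, dg_m\|_{L_s(X)} \le C_s^a(X)\|\sum_m dg_m\|_{L_s(X)}$, which follows from the definition of $C_s^a(X)$ by the same convexity argument as in Remark~\ref{extreme_point} (the map $\alpha \mapsto \|\sum_m\alpha_m dg_m\|$ is convex, so its supremum over the cube is attained at a vertex). Applying this with $\alpha_{2j} = 1$ and $\alpha_{2j+1} = 0$ yields $\|g'\|_{L_s(X)} \le C_s^a(X)\|g\|_{L_s(X)}$, which is \eqref{aumd_stein_ineq}. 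The main obstacle I anticipate is purely the bookkeeping of analyticity across the interleaved filtration, i.e.\ verifying rigorously that neither the even nor the odd differences acquire a nonpositive frequency in the coordinate added at each step, so that the martingale stays Hardy; once this is secured, the estimate is a formal consequence of the $\text{AUMD}$ property exactly as in the classical Stein inequality.
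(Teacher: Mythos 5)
Your proposal is correct and follows essentially the same route as the paper: the identical interleaved filtration ($\mathcal{C}_{2j}=\sigma(z_0,\dots,z_j)\otimes\sigma(\zeta_0,\dots,\zeta_j)$, $\mathcal{C}_{2j-1}=\sigma(z_0,\dots,z_j)\otimes\sigma(\zeta_0,\dots,\zeta_{j-1})$), the same identification of $g'$ as the martingale transform of $g$ retaining the even-indexed differences, and the same appeal to the extreme-point argument of Remark~\ref{extreme_point}. The only difference is that you spell out the frequency bookkeeping showing $g$ is a Hardy martingale for this filtration, a point the paper asserts without detail.
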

\begin{proof}
Consider the filtration on $\T_z^\N \times \T_\zeta^\N$ defined by $\mathcal{B}_{2j} = \sigma(z_0, \cdots, z_j)\otimes \sigma(\zeta_0, \cdots, \zeta_j)$ and $\mathcal{B}_{2j-1} = \sigma(z_0, \cdots, z_j) \otimes \sigma(\zeta_0, \cdots, \zeta_{j-1})$. Then $f = \sum_k \zeta_k F_k(z)$ is a Hardy martingale with respect to the above filtration. Let $f' = \sum_k \zeta_k \E^{\mathcal{G}_k}(F_k)$. Then we have $f' = \sum_j (\E^{\mathcal{B}_{2j}} - \E^{\mathcal{B}_{2j-1}})(f)$. It follows (see Remark \ref{extreme_point}) that $\| f' \|_{L_s(X)} \leq C_s^{a}(X) \| f \|_{L_s(X)}$, whence \eqref{aumd_stein_ineq}.
\end{proof}

\begin{prop}\label{aumd_unconditional}
Let $X$ be an $\text{AUMD}$ space. Assume that $\{x_i\}_{i \in I}$ is a 1-unconditional basic sequence in $X$. Then for any $1 \leq s < \infty$ and any finite sequence of functions $(\theta_k)_{k \geq 0}$ in $H_s(\T^\N)$, \begin{eqnarray*} \Big \|\sum_k \E^{\mathcal{G}_k}(\theta_k)  x_{i_k} \Big \|_{L_s(m_\infty; X)} \leq C_s^{a}(X) \Big \| \sum_k \theta_k  x_{i_k}\Big \|_{L_s(m_\infty; X)}.\end{eqnarray*}
\end{prop}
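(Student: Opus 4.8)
The plan is to mirror the proof of Proposition \ref{unconditional} line by line, replacing the Rademacher sequence $(\varepsilon_k)$ by the independent torus sequence $\zeta = (\zeta_k)$ and substituting the analytic Stein inequality \eqref{aumd_stein_ineq} of Proposition \ref{stein_aumd} for the classical one \eqref{stein_umd}. For the chosen indices $i_k$ I would set $F_k(z) = \theta_k(z)\, x_{i_k}$. Since each $\theta_k$ lies in $H_s(\T^\N)$ and $x_{i_k}$ is a constant vector, each $F_k$ belongs to $H_s(\T^\N; X)$, so the hypotheses of Proposition \ref{stein_aumd} are met; and because conditional expectation commutes with multiplication by a fixed vector, $\E^{\mathcal{G}_k}(F_k) = \E^{\mathcal{G}_k}(\theta_k)\, x_{i_k}$.

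Next I would identify the two sides of \eqref{aumd_stein_ineq} with the quantities appearing in the statement. The key point is that $\{x_i\}$ is $1$-unconditional, so its norm is invariant under multiplication of the coefficients by unimodular scalars. Hence, for every fixed $\zeta \in \T^\N$ and a.e.\ $z$,
$$\Big\| \sum_k \zeta_k \theta_k(z) x_{i_k}\Big\|_X = \Big\| \sum_k \theta_k(z) x_{i_k}\Big\|_X,$$
and the analogous identity holds with $\E^{\mathcal{G}_k}(\theta_k)$ in place of $\theta_k$. Raising these to the $s$-th power and integrating first in $\zeta$ (which then drops out) and then in $z$ yields
$$\Big\|\sum_k \zeta_k F_k\Big\|_{L_s(X)} = \Big\|\sum_k \theta_k x_{i_k}\Big\|_{L_s(m_\infty; X)}, \qquad \Big\|\sum_k \zeta_k \E^{\mathcal{G}_k}(F_k)\Big\|_{L_s(X)} = \Big\|\sum_k \E^{\mathcal{G}_k}(\theta_k) x_{i_k}\Big\|_{L_s(m_\infty; X)}.$$
Feeding these two equalities into \eqref{aumd_stein_ineq} delivers the claimed inequality.

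The step requiring the most care is the passage from sign-invariance to invariance under the full unimodular group: in the analytic setting the averaging variables $\zeta_k$ range over all of $\T$ rather than over $\{-1,1\}$, so I must use $1$-unconditionality in its complex form (invariance under multipliers of modulus $1$) rather than merely under sign changes. For the iterated $\ell_p(\ell_q)$-type bases this is automatic, since the norm depends only on the moduli of the coordinates, but it is the hypothesis one must keep in view. A minor point to check along the way is that $F_k$ genuinely lies in $H_s(\T^\N; X)$ so that Proposition \ref{stein_aumd} applies, which follows at once because $x_{i_k}$ is constant in $z$ and therefore does not affect analyticity.
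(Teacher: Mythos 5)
Your proof is correct and is essentially the paper's own argument: the paper disposes of Proposition \ref{aumd_unconditional} by saying it follows verbatim the proof of Proposition \ref{unconditional}, which is exactly what you carry out, with $\zeta=(\zeta_k)$ replacing the Rademacher sequence and Proposition \ref{stein_aumd} replacing Theorem \ref{stein}. Your added remark that $1$-unconditionality must here be used in its complex (unimodular) form is a point the paper leaves implicit, and it is indeed satisfied by the canonical bases to which the proposition is later applied.
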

\begin{proof}
It follows verbatim the proof of Proposition \ref{unconditional}.
\end{proof}

Let $X$ be as in Proposition \ref{aumd_unconditional}, $\{ x_i \}$ is a 1-unconditional basic sequence in $X$. Then for all $1 \leq s < \infty$, we have \begin{eqnarray*} \Big \|\sum_k \E^{\mathcal{G}_k}(\theta_k)  x_{i_k} \Big \|_{L_1(m_\infty; X)} \leq C_s^{a}(X) \Big \| \sum_k \theta_k  x_{i_k}\Big \|_{L_\infty(m_\infty; X)}.\end{eqnarray*} Hence $$ S^{a}(X; \{x_i\}) \leq C_s^{a}(X)$$ for all $1 \leq s < \infty$.

\begin{thm}\label{mainthm_analytic}
Let $E$ be a Banach space with a 1-unconditional basis $\{ e_i : i \in I \}$, let $F$ be another Banach space. Let $E(F)$ be defined as in Theorem \ref{E(F)}. For any fixed family of vectors $\{ f_j : j \in J\}$ in $F$, consider the family of vectors $\{ e_i \otimes f_j: i \in I, j \in J\}$ in $E(F)$, then we have $$S^{a}(E(F)) \geq S^{a}(E)S^{a}(F),$$ where $S^{a}(E(F))$, $S^{a}(E)$ and $S^{a}(F)$ are defined with respect to the mentioned families of vectors respectively.
\end{thm}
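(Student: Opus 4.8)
The plan is to imitate the proof of Theorem \ref{E(F)} almost line by line; the one genuinely new difficulty is that everything must now be realised on a single copy of $\T^\N$ carrying its canonical filtration $(\mathcal{G}_m)_m$, with all test functions required to be Hardy. The device that makes the transfer possible is the elementary remark, already implicit in Proposition \ref{stein_aumd}, that a product of two Hardy functions depending on disjoint blocks of torus variables is again a Hardy function for any interleaving of those blocks.

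First I would fix $\varepsilon>0$ and, from the definitions of $S^{a}(E)$ and $S^{a}(F)$, choose distinct indices $\{i_k\}$, $\{j_n\}$ and Hardy functions $\theta_k,\xi_n\in H_\infty(\T^\N)$, normalised so that $\|\sum_k\theta_k e_{i_k}\|_{L_\infty(E)}\le 1$ and $\|\sum_n\xi_n f_{j_n}\|_{L_\infty(F)}\le 1$, realising $S^{a}(E)-\varepsilon$ and $S^{a}(F)-\varepsilon$ respectively. Replacing each $\xi_n$ by $\E^{\mathcal{G}_M}(\xi_n)$ for $M$ large leaves every $\E^{\mathcal{G}_n}(\xi_n)$ unchanged (for $n\le M$), can only lower the $L_\infty$-norm on the right (conditional expectation being an $L_\infty$-contraction), and preserves the Hardy property; so I may assume the $\xi_n$, and likewise the $\theta_k$, depend on only finitely many coordinates. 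I would then use one block of variables $z=(z_j)$ for the $E$-data and a sequence of independent finite blocks $\zeta^{(k)}$, one per index $k$, for the $F$-data, enumerating all coordinates in successive blocks $z_k,\zeta^{(k)}_0,\dots,\zeta^{(k)}_M$. The resulting canonical filtration on $\T^\N$ reproduces the lexicographic filtration $\mathcal{F}_{k,n}$ of Theorem \ref{E(F)}: at the step $m(k,n)$ where $\zeta^{(k)}_n$ is revealed, one knows exactly $z_0,\dots,z_k$ and $\zeta^{(k)}_0,\dots,\zeta^{(k)}_n$ among the relevant variables, so independence of the blocks gives the factorisation $\E^{\mathcal{G}_{m(k,n)}}(\theta_k\xi_n)=\E^{\mathcal{G}_k}(\theta_k)\,\E^{\mathcal{G}_n}(\xi_n)$, the exact analogue of \eqref{cond_exp_h}.

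The step with no counterpart in Theorem \ref{E(F)}, and the one I expect to be the crux, is to verify that $h=\sum_{k,n}\theta_k(z)\,\xi_n(\zeta^{(k)})\,e_{i_k}\otimes f_{j_n}$ lies in $H_\infty(\T^\N;E(F))$. Since Hardy martingales are stable under finite sums, it is enough to treat a single product $\theta_k(z)\xi_n(\zeta^{(k)})$. When the canonical filtration reveals a new variable, say a $z$-coordinate, the corresponding martingale increment of the product is the increment of $\theta_k$ in that variable—analytic in it because $\theta_k$ is Hardy—times the current conditional expectation of $\xi_n(\zeta^{(k)})$, which does not involve that variable; hence the increment is analytic in the freshly revealed variable, and symmetrically for the $\zeta^{(k)}$-coordinates. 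This is precisely the computation behind Proposition \ref{stein_aumd}, applied here block by block, and it shows $h\in H_\infty(\T^\N;E(F))$.

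With $h$ in hand, the remaining estimates are those of Theorem \ref{E(F)} verbatim: $1$-unconditionality of $\{e_i\}$ yields $\|h\|_{L_\infty(\T^\N;E(F))}\le 1$, and Jensen's inequality together with $1$-unconditionality factorises the lower bound as
\[
\Big\|\sum_{k,n}\E^{\mathcal{G}_{m(k,n)}}(\theta_k\xi_n)\,e_{i_k}\otimes f_{j_n}\Big\|_{L_1(\T^\N;E(F))}\ \ge\ \big(S^{a}(E)-\varepsilon\big)\big(S^{a}(F)-\varepsilon\big).
\]
Assigning the test function $0$, with dummy distinct indices, to the intermediate steps $m\notin\{m(k,n)\}$, the left-hand side is at most $S^{a}(E(F))$ by the definition of $S^{a}$; letting $\varepsilon\to 0$ gives $S^{a}(E(F))\ge S^{a}(E)S^{a}(F)$, as desired.
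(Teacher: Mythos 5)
Your proof is correct and takes essentially the same route as the paper: your interleaved enumeration $z_k,\zeta^{(k)}_0,\dots,\zeta^{(k)}_M$ is exactly the paper's lexicographic filtration $\mathcal{F}^{a}_{k,n}$ built on the infinite tensor product $L_\infty(\T^{\N})\otimes L_\infty(\T^{\N})\otimes\cdots$, and your preliminary reduction via $\E^{\mathcal{G}_M}$ is the paper's remark that one may restrict to $\theta_k,\xi_n$ depending on finitely many variables. The one point where you supply detail the paper leaves implicit --- the verification that a product of Hardy functions in disjoint blocks of variables stays Hardy under any order-preserving interleaving, which the paper compresses into the assertion that $\mathcal{F}^{a}_{k,n}$ is an analytic filtration --- is carried out correctly.
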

\begin{proof}
The proof is similar to the proof of Theorem \ref{E(F)}. We mention the slight difference concerning the filtration. Consider the infinite tensor product $L_\infty(\T^{\N}) \otimes L_\infty(\T^{\N}) \otimes \cdots$, define $$z_{k, n} = \underbrace{1 \otimes \cdots \otimes 1}_{k \quad \text{times}} \otimes z_n \otimes 1 \otimes \cdots, \text{if } n \ge 1$$ and $$z_{k, 0} = z_k \otimes 1 \otimes 1 \otimes \cdots.$$
Then the filtration defined by $\mathcal{F}^{a}_{k,n} : = \sigma\left( z_j: j \le (k, n)\right)$ is an analytic filtration, where the order on $\N \times \N$ is the lexigraphic order as defined in the proof of Theorem \ref{E(F)}. This filtration plays the role similar to that of $(\mathcal{F}_{k, n})_{k,n}$ in the proof of Theorem \ref{E(F)}. Note that we may restrict to the functions $\theta_k, \xi_n$ depending only on finitely many variables. Thus only a finite subset of $\N \times \N$ is used.
\end{proof}

The following lemma requires slightly more efforts than Lemma \ref{baselem}.
\begin{lem}\label{baselem_aumd}
Suppose that $1 \leq p \neq q < \infty$. If $E_1 = \ell_p^{(2)}(\ell_q^{(2)})$, then $$S^{a}(E_1) \geq \kappa(p,q) > 1.$$
\end{lem}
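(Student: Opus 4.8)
The strategy is to transport the argument of Lemma~\ref{baselem} to the analytic setting, the role of the arithmetic expectation $\E$ being taken over by the geometric mean $M(|\cdot|)$, which is exactly the quantity governing $\kappa(p,q)$ in Proposition~\ref{geo_mean}. The link is provided by \eqref{geo_mean_outer}: for an outer function $F$ one has $\E^{\mathcal{G}}(F) = \widehat{F}(0) = F(0)$ with $|F(0)| = M(|F|)$, so that feeding outer functions into the definition of $S^{a}(E_1)$ and applying the canonical conditional expectations automatically produces geometric means. As in Lemma~\ref{baselem} I would work with the four canonical basis vectors $e_i^p \otimes e_j^q$ of $E_1 = \ell_p^{(2)}(\ell_q^{(2)})$, the vectors with $j=1$ playing the role of the ``$a$-part'' (to be averaged) and those with $j=2$ the ``$b$-part'' (to be left untouched).

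First I would fix the partition $\T = A \dot{\cup} B$ into the upper and lower half-circles, together with positive scalars $f_1,f_2,g_1,g_2$, and choose outer functions $F,G$ of a single torus variable with $|F| = f_1,\ |G| = g_1$ on $A$ and $|F| = f_2,\ |G| = g_2$ on $B$; by \eqref{geo_mean_outer}, $|F(0)| = \sqrt{f_1 f_2} =: M$. To create the second $\ell_p^{(2)}$-component with the opposite assignment of moduli --- which is what made the $E_1$-norm pointwise constant in $\varepsilon$ in Lemma~\ref{baselem} --- I would use the reflected functions $F^{*}(z) := \overline{F(\bar z)}$ and $G^{*}(z):=\overline{G(\bar z)}$, which are again outer, satisfy $|F^{*}(0)| = M$, and carry the reflected boundary moduli $f_2,g_2$ on $A$ and $f_1,g_1$ on $B$. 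The genuine difficulty, and the only point where more is needed than in Lemma~\ref{baselem}, is that the definition of $S^{a}$ ties the $k$-th summand to the fixed level $\mathcal{G}_k$ of the canonical filtration, while I must average \emph{two} different functions down to their common geometric mean $M$. Since a conditional expectation integrates a given coordinate out precisely when its level lies strictly below the index of that coordinate, I would place the whole partition on the second coordinate $z_2$: then both $\E^{\mathcal{G}_0}$ and $\E^{\mathcal{G}_1}=\E^{\sigma(z_1)}$ average $z_2$ away, whereas $\E^{\mathcal{G}_2}$ and $\E^{\mathcal{G}_3}$ retain it.

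Concretely I would take $N=3$ and set $\theta_0 = F(z_2),\ \theta_1 = F^{*}(z_2),\ \theta_2 = G(z_2),\ \theta_3 = G^{*}(z_2)$, attached respectively to $e_1^p\otimes e_1^q,\ e_2^p\otimes e_1^q,\ e_1^p\otimes e_2^q,\ e_2^p\otimes e_2^q$. Then $\E^{\mathcal{G}_0}(\theta_0) = F(0)$ and $\E^{\mathcal{G}_1}(\theta_1) = F^{*}(0)$ are constants of modulus $M$, while $\E^{\mathcal{G}_2}(\theta_2) = G(z_2)$ and $\E^{\mathcal{G}_3}(\theta_3) = G^{*}(z_2)$ are unchanged. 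The reflection guarantees that on each of $A$ and $B$ the two $\ell_q^{(2)}$-blocks of $\sum_k \theta_k x_{i_k}$ carry the value-pairs $(f_1,g_1),(f_2,g_2)$ in the two possible orders, so its $E_1$-norm is the pointwise constant $\big[(f_1^q+g_1^q)^{p/q}+(f_2^q+g_2^q)^{p/q}\big]^{1/p}$; the same phenomenon on the averaged side yields the pointwise constant $\big[(M^q+g_1^q)^{p/q}+(M^q+g_2^q)^{p/q}\big]^{1/p}$. Dividing, and rewriting each bracket as twice the corresponding integral over $\T$ (using $m(A)=m(B)=\tfrac12$), I would obtain
$$ S^{a}(E_1) \geq \left[\frac{\int_\T (M(|f|)^q + |g|^q)^{p/q}\,dm}{\int_\T (|f|^q + |g|^q)^{p/q}\,dm}\right]^{1/p}, $$
with $f = f_1\chi_A + f_2\chi_B$ and $g = g_1\chi_A + g_2\chi_B$. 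Taking the supremum over all admissible data (approximating to accommodate $g_i=0$) turns the right-hand side into $\kappa(p,q)$ by the very definition in Proposition~\ref{geo_mean}, whence $S^{a}(E_1)\geq \kappa(p,q)>1$. I expect the level-matching issue described above to be the only real obstacle; once it is circumvented by placing the partition on $z_2$ and using the reflected outer functions, the remaining verification is as mechanical as in Lemma~\ref{baselem}.
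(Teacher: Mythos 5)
Your proof is correct and takes essentially the same route as the paper: outer functions with two-valued moduli (Szeg\"{o}'s theorem), the value-swapping trick that makes the $E_1$-norm pointwise constant, the identity $|\E F| = M(|F|)$ for outer $F$ from \eqref{geo_mean_outer}, and finally Proposition \ref{geo_mean}. The differences are only expository: you instantiate the definition of $S^{a}(E_1)$ directly by placing all four functions on the coordinate $z_2$ (thereby making explicit the filtration level-matching that the paper absorbs into the unproved claim $S^{a}(E_1) \geq \| U \|$), and you produce the swapped outer function by the reflection $F^{*}(z)=\overline{F(\bar z)}$ on a conjugation-symmetric partition instead of a second appeal to Szeg\"{o}'s theorem.
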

\begin{proof}
We will use the notations in the proof of Lemma \ref{baselem}. Define a linear map $U: H_\infty(\T, m; E_1) \rightarrow H_1(\T, m; E_1)$ by $$ U \Big[a_{ij}(z) e_i^p\otimes e_j^q\Big] = \left\{ \begin{array}{lc} \E(a_{ij}) e_i^p\otimes e_j^q, \text{ if } j = 1 \\ a_{ij}(z) e_i^p\otimes e_j^q, \text{ if } j =2 \end{array}.\right.$$ If $C = \| U \|_{H_\infty(E_1) \rightarrow H_1(E_1)}$, then $S^{a}(E_1) \geq C$. By definition, for any $a, b, c, d$ functions in $H_\infty(\T)$, we have \begin{eqnarray}\label{abcd} & &  \int_\T \Big\{ (|\E a|^q + | b(z) |^q)^{p/q} + (|\E c |^q + | d(z) |^q)^{p/q} \Big \}^{1/p} dm(z) \\ \nonumber & \leq & C \sup_{ z \in \T}  \Big\{ (| a(z)|^q + | b(z) |^q)^{p/q} + (| c(z) |^q + | d(z) |^q)^{p/q} \Big \}^{1/p}. \end{eqnarray} Note that if $a, c$ are outer functions, then by \eqref{geo_mean_outer}, we have $| \E a | =  M(|a|)$ and $| \E c | = M(|c|)$. So for any functions $a, b, c, d \in H_\infty(\T)$ such that $a, c$ are outer, we have 
\begin{eqnarray}\label{outer} & &  \int_\T \Big\{ (M (|a|)^q + | b(z) |^q)^{p/q} + (M(| c |)^q + | d(z) |^q)^{p/q} \Big \}^{1/p} dm(z) \\ \nonumber & \leq & C \sup_{ z \in \T}  \Big\{ (| a(z)|^q + | b(z) |^q)^{p/q} + (| c(z) |^q + | d(z) |^q)^{p/q} \Big \}^{1/p}. \end{eqnarray} 
By the classical Szeg\"{o}'s condition, if $a', b', c', d'$ are functions in $L_\infty(\T)$ which are bounded from below, then there are outer functions $a, b, c, d \in H_\infty(\T)$, such that $ |a'| = |a|, |b'| = |b|, |c'| = |c|, |d'| = |d|$ a.e.. Hence \eqref{outer} still holds for any 2-valued non-vanishing functions $a, b, c, d \in L_\infty(\T)$ (note that for a function taking only two values, non-vanishing is the same as bounded from below). By approximation, we can further relax the non-vanishing condition on $b, d$. Now consider any measurable partition $\T = A \dot{\cup} B$, such that $m(A) = m(B) = \frac{1}{2}$. If $a = u \chi_A + v \chi_B$, $c = v \chi_A + u \chi_B$, $b = w \chi_A + t \chi_B$ and $d = t \chi_A + w \chi_B$, then it is easy to check that for all $z \in \T$, we have  \begin{eqnarray*} & & \Big\{ (| a(z)|^q + | b(z) |^q)^{p/q} + (| c(z) |^q + | d(z) |^q)^{p/q} \Big \}^{1/p}  \\ &=&  \Big \{ (|u|^q + | w |^q)^{p/q} + (|v|^q + | t |^q)^{p/q}\Big\}^{1/p} \\
&=& 2^{1/p}\Big\{\int_\T (| a |^q + |b|^q)^{p/q} dm\Big\}^{1/p}.\end{eqnarray*} Similarly for all $z \in \T$, we have \begin{eqnarray*}& & \Big\{ (M(| a|)^q + | b(z) |^q)^{p/q} + (M(| c |)^q + | d(z) |^q)^{p/q} \Big \}^{1/p} \\ &=&  2^{1/p}\Big\{\int_\T (M(| a |)^q + |b|^q)^{p/q} dm\Big\}^{1/p}.\end{eqnarray*} Substituting these equalities to \eqref{outer}, we get $$\Big\{\int_\T (M(| a |)^q + |b|^q)^{p/q} dm\Big\}^{1/p} \leq C \Big\{\int_\T (| a |^q + |b|^q)^{p/q} dm\Big\}^{1/p}.$$ By Proposition \ref{geo_mean}, we have $C \geq \kappa(p,q)$. This completes the proof.
\end{proof}

\begin{thm}\label{mainthm_aumd}
Suppose that $1 \leq p\ne q < \infty$. If $E_n$'s are defined as in Theorem \ref{mainthm}, then for any $1 \leq s < \infty$, we have $$C_s^{a}(E_n) \geq S^{a}(E_n) \geq \kappa(p,q)^n. $$ Moreover, if $1< p, q< \infty$, then there exists $\kappa_2 = \kappa_2(p,q,s)$, such that $$C_s^{a} (E_n) \leq \kappa_2^n.$$
\end{thm}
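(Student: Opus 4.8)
The plan is to establish the two bounds separately, each by transcribing into the analytic setting an argument already carried out in \S\ref{mainresults}.

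For the lower bound I would argue by induction exactly as in Theorem \ref{mainthm}. The starting point is the inequality $S^{a}(X;\{x_i\}) \le C_s^{a}(X)$ recorded just after Proposition \ref{aumd_unconditional}, valid for every $1 \le s < \infty$; it reduces the whole first assertion to a lower bound on $S^{a}(E_n)$. With the canonical $1$-unconditional basis $\{e_i^p \otimes e_j^q\}$ of $E_1 = \ell_p^{(2)}(\ell_q^{(2)})$, one has the identification $E_{n+1} = \ell_p^{(2)}(\ell_q^{(2)}(E_n)) = E_1(E_n)$ in the sense of Theorem \ref{mainthm_analytic}, the distinguished family in $E_n$ being its own canonical basis. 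Applying Theorem \ref{mainthm_analytic} with $E = E_1$ and $F = E_n$ gives $S^{a}(E_{n+1}) \ge S^{a}(E_1)\,S^{a}(E_n)$, while Lemma \ref{baselem_aumd} supplies the base estimate $S^{a}(E_1) \ge \kappa(p,q)$. Hence $S^{a}(E_{n+1}) \ge \kappa(p,q)\,S^{a}(E_n)$, and by induction $S^{a}(E_n) \ge \kappa(p,q)^n$; combining with $C_s^{a}(E_n) \ge S^{a}(E_n)$ yields the first claim.

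For the upper bound, where now $1 < p,q < \infty$, I would mirror Proposition \ref{majoration}. The key observation is that, up to the harmless proportionality factor $2^{1/p+1/q}$ relating the counting measure on two points to the uniform probability measure $\mu$ on $D$, there is an isometric-up-to-rescaling identification $E_{n+1} = \ell_p^{(2)}(\ell_q^{(2)}(E_n)) \cong L_p(\mu; L_q(\mu; E_n))$; since the analytic $\text{UMD}$ constant is a ratio of norms and hence invariant under rescaling, $C_s^{a}(E_{n+1}) = C_s^{a}(L_p(\mu; L_q(\mu; E_n)))$. I would then run the same chain of inequalities as in Proposition \ref{majoration}, using the two analytic analogues of the facts employed there: the $L_s$-stability $C_s^{a}(L_s(\mu; X)) = C_s^{a}(X)$, and the change-of-exponent comparison $C_r^{a}(X) \le \beta^{a}(r,s)\,C_s^{a}(X)$ valid for all $1 \le r,s < \infty$ and all complex $X$. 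Chaining these as
\begin{eqnarray*}
C_s^{a}(L_p(L_q(E_n))) &\le& \beta^{a}(s,p)\, C_p^{a}(L_p(L_q(E_n))) = \beta^{a}(s,p)\, C_p^{a}(L_q(E_n)) \\
&\le& \beta^{a}(s,p)\beta^{a}(p,q)\, C_q^{a}(L_q(E_n)) = \beta^{a}(s,p)\beta^{a}(p,q)\, C_q^{a}(E_n) \\
&\le& \beta^{a}(s,p)\beta^{a}(p,q)\beta^{a}(q,s)\, C_s^{a}(E_n),
\end{eqnarray*}
and setting $\kappa_2 = \beta^{a}(s,p)\beta^{a}(p,q)\beta^{a}(q,s)$, I obtain $C_s^{a}(E_{n+1}) \le \kappa_2\, C_s^{a}(E_n)$, whence $C_s^{a}(E_n) \le \kappa_2^n\, C_s^{a}(\C)$; absorbing the finite base constant $C_s^{a}(\C)$ into $\kappa_2$ gives the stated bound.

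The main obstacle is supplying the two analytic analogues invoked in the upper bound, since, unlike their classical $\text{UMD}$ counterparts, they are not quoted earlier in the excerpt. The $L_s$-stability $C_s^{a}(L_s(\mu; X)) = C_s^{a}(X)$ should follow by a Fubini argument: the inequality $\le$ comes from $X$ sitting $1$-complementedly and isometrically in $L_s(\mu; X)$ via integration against $\mu$, while $\ge$ follows by disintegrating a Hardy martingale over the inert $\mu$-variable and noting that last-variable analyticity of the differences is preserved. The genuinely delicate point is the change-of-exponent inequality for Hardy martingales, including the endpoint $s = 1$ permitted in the analytic theory; I would deduce it from the $s$-independence of the $\text{AUMD}$ property established by Garling \cite{Garling_UMD}, via the standard Burkholder-type good-$\lambda$/distributional argument adapted to the canonical filtration on $\T^{\N}$, taking care throughout that the analyticity of the martingale differences in the last variable is respected.
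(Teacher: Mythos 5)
Your lower bound argument coincides with the paper's proof: reduce to $S^{a}(E_n)$ via the inequality $S^{a}(X;\{x_i\}) \le C_s^{a}(X)$, then iterate Theorem \ref{mainthm_analytic} starting from Lemma \ref{baselem_aumd}; there is nothing to add there. For the upper bound, however, you take a genuinely different and much heavier route. The paper's proof is essentially one line: Hardy martingales form a subclass of all martingales adapted to the canonical filtration on $\T^{\N}$, so trivially $C_s^{a}(E_n) \le C_s(E_n)$, and Proposition \ref{majoration} (whose proof applies verbatim with $(D,\mu)$ in place of $(\T,m)$, or after noting that $E_n$ sits $1$-complementedly in the spaces $Y_n$ built over $\T$) already gives $C_s(E_n) \le \chi^n C_s(\C)$. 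This observation sidesteps entirely the two analytic analogues you are forced to develop: the Fubini identity $C_s^{a}(L_s(\mu;X)) = C_s^{a}(X)$ (your sketch of that is fine), and, more seriously, the change-of-exponent inequality $C_r^{a}(X) \le \beta^{a}(r,s)\, C_s^{a}(X)$ with constants independent of $X$. The latter is a substantive extrapolation theorem for Hardy martingales, essentially Garling's result; your good-$\lambda$ plan is plausible --- stopping times do preserve Hardy martingales, since $\chi_{\{\tau \ge n\}}$ is $\mathcal{G}_{n-1}$-measurable and so multiplies $df_n$ without disturbing analyticity in $z_n$ --- but carrying it out with universal constants is real work, not a remark, and it is the only place where your argument is not self-contained. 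What your route buys in exchange: it stays inside the analytic category throughout, and if the extrapolation is valid down to exponent $1$ (as in Garling's theorem), it yields the upper bound at $s=1$ as well, which the paper's reduction cannot give directly since $C_1(E_n) = \infty$. In short: your proof is correct in outline and identical to the paper's for the first claim, but for the second claim you overlooked the cheap inequality $C_s^{a} \le C_s$ that makes the paper's proof immediate, replacing it by a nontrivial analytic extrapolation that you only sketch.
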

\begin{proof}
The first part of proof is identical to the proof of Theorem \ref{mainthm}. The second part follows from the fact that $C_s^{a}(E_n) \le C_s(E_n)$ and Proposition \ref{majoration}.  
\end{proof}


\section{Construction and further discussions}
For the sake of clearness, we introduce the family $X_n(p, q)$, which is defined as follows: Let $X_0(p,q)= \R$, and define by recursion that $$X_{n+1}(p, q) = L_p(D, \mu; L_q(D, \mu; X_n(p,q))).$$ In the complex case, $X^{\C}_n(p,q)$ is defined similarly. 

Obviously, $X_n(p,q)$ is isometric to $E_n$ defined in the previous sections using $p, q$. Our main purpose for introducing $X_n$'s is the existence of canonical isometric inclusion $X_n(p,q) \subset X_{n+1}(p,q)$. By these inclusions, the union $\cup_n X_n(p,q)$ is a normed space and its completion will be denoted by $X(p,q)$. We have $$X(p,q): =  \overline{\cup_n X_n(p,q)} \simeq \lim_{\longrightarrow} X_n(p,q),$$ where the last term is the inductive limit of $X_n(p,q)$'s associated to the canonical inclusions. In the complex case, $X^{\C}(p,q)$ is defined similarly. 
\begin{rem}
If $1\le p = q < \infty$, then $X(p,p)$ is the real space $L^p_{\R}(D^\N, \mu^{\otimes \N})$ and $X^{\C}(p,p)$ is the complex space $L_{\C}^p(D^\N, \mu^{\otimes \N})$.
\end{rem}

We have the following complex interpolation result.
\begin{prop}\label{inter}
Let $1< p_0, p_1, q_0, q_1< \infty$ and $0 < \theta < 1$. Then we have the following isometric isomorphism: $$X^{\C}(p_\theta, q_\theta) = [ X^{\C}(p_0, q_0), X^{\C}(p_1, q_1)]_\theta,$$ with $\frac{1}{p} = \frac{\theta}{p_1} + \frac{1-\theta}{p_0}$ and $\frac{1}{q} = \frac{\theta}{q_1} + \frac{1-\theta}{q_0}$.
\end{prop}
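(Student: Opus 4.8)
The plan is to prove the identity first at each finite stage $X_n^{\C}$ by induction on $n$, and then to transfer it to the inductive limit $X^{\C}(p,q)=\overline{\cup_n X_n^{\C}(p,q)}$ by means of a compatible family of conditional-expectation projections. Throughout I write $A_i=X^{\C}(p_i,q_i)$ for $i=0,1$ and $A_\theta=X^{\C}(p_\theta,q_\theta)$, where $p_\theta,q_\theta$ are defined by the stated relations, and I note that since we use complex interpolation everything must be over $\C$, which is why the statement concerns $X^{\C}$.

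First I would handle the finite-dimensional stages, namely the claim that
$$X_n^{\C}(p_\theta,q_\theta)=[X_n^{\C}(p_0,q_0),X_n^{\C}(p_1,q_1)]_\theta$$
isometrically for every $n$. The case $n=0$ is trivial since $X_0^{\C}=\C$ and $[\C,\C]_\theta=\C$. For the inductive step I use the recursion $X_{n+1}^{\C}(p,q)=L_p(D,\mu;L_q(D,\mu;X_n^{\C}(p,q)))$. Because $\mu$ is the two-point measure, $L_p(D,\mu;Z)$ is merely a weighted $\ell_p^{(2)}$-sum of copies of $Z$, and for such finite direct sums the Calder\'on interpolation identity $[L_{p_0}(\mu;W_0),L_{p_1}(\mu;W_1)]_\theta=L_{p_\theta}(\mu;[W_0,W_1]_\theta)$ is elementary and isometric (the weights are irrelevant). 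Applying this twice, once for the outer $L_p$ layer and once for the inner $L_q$ layer, and feeding in the inductive hypothesis for the innermost space, one obtains $[X_{n+1}^{\C}(p_0,q_0),X_{n+1}^{\C}(p_1,q_1)]_\theta=L_{p_\theta}(\mu;L_{q_\theta}(\mu;X_n^{\C}(p_\theta,q_\theta)))=X_{n+1}^{\C}(p_\theta,q_\theta)$, closing the induction.

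To pass to the limit I first record that $(A_0,A_1)$ is a compatible couple: since $\mu$ is a probability measure, Jensen's inequality yields contractive injective inclusions $X^{\C}(p,q)\hookrightarrow X^{\C}(1,1)=L_1(D^\N)$ for all $p,q\ge 1$, so $A_0$ and $A_1$ embed in the common space $L_1(D^\N)$ and $\cup_n X_n^{\C}\subset A_0\cap A_1$. Next I introduce the conditional-expectation operators $E_N$ that integrate out all coordinates beyond level $N$; on $\cup_n X_n^{\C}$ this is given by a single formula independent of $(p,q)$, and by Jensen it is a contractive projection of $X^{\C}(p,q)$ onto $X_N^{\C}(p,q)$ for every $(p,q)$. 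Thus $E_N$ is a morphism of the couple $(A_0,A_1)$ into $(X_N^{\C}(p_0,q_0),X_N^{\C}(p_1,q_1))$, while the isometric inclusions $\iota_N$ run the other way. By functoriality of complex interpolation together with the finite-stage identity, $E_N$ and $\iota_N$ induce contractions $[A_0,A_1]_\theta\to X_N^{\C}(p_\theta,q_\theta)$ and $X_N^{\C}(p_\theta,q_\theta)\to[A_0,A_1]_\theta$ respectively. Since $\iota_N$ is isometric and $E_N$ fixes $X_N^{\C}$, these two contractions sandwich the norms, giving $\|f\|_{[A_0,A_1]_\theta}=\|f\|_{A_\theta}$ for every $f\in\cup_n X_n^{\C}$.

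Finally I would verify that $\cup_n X_n^{\C}$ is dense in $[A_0,A_1]_\theta$. Using that $A_0\cap A_1$ is dense in $[A_0,A_1]_\theta$, the interpolation inequality $\|x\|_\theta\le\|x\|_{A_0}^{1-\theta}\|x\|_{A_1}^{\theta}$ for $x\in A_0\cap A_1$, and the fact that $E_N\to\mathrm{id}$ strongly on each $A_i$ (true because $E_N$ is uniformly contractive and eventually equals the identity on the dense set $\cup_n X_n^{\C}$), one deduces $E_N g\to g$ in $[A_0,A_1]_\theta$ for every $g$, with $E_N g\in\cup_n X_n^{\C}$. Hence $\cup_n X_n^{\C}$ is dense in $[A_0,A_1]_\theta$, and it is dense in $A_\theta$ by definition; since the two norms agree on this common dense subspace, the completions coincide, which is the asserted isometric isomorphism (the identification being the identity on $\cup_n X_n^{\C}$). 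I expect the main obstacle to be exactly this transfer from finite stages to the limit, because interpolation does not in general commute with completions of increasing unions; what rescues the argument is the compatibility of the projections $E_N$ across the entire scale of parameters, i.e.\ that a single formula is simultaneously contractive for all $(p,q)$, which drives both the norm identification and the density step.
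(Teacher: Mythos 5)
Your proof is correct, but it takes a genuinely different route from the paper's. The paper disposes of the proposition in three lines: it observes that $X^{\C}(p,q)$ is a Banach lattice of measurable functions on $(D^{\N},\mu^{\otimes\N})$ which is $\min(p,q)$-convex and $\max(p,q)$-concave, hence reflexive (citing Lindenstrauss--Tzafriri), and then invokes Calder\'on's classical formula identifying the complex interpolation space of such a couple of K\"othe function spaces with the Calder\'on product $X_0^{1-\theta}X_1^{\theta}$, for which the iterated structure with interpolated exponents is immediate from pointwise H\"older-type identities. You instead prove everything by hand: the identity at each finite stage by induction, via the elementary isometric identity $[L_{p_0}(\mu;W_0),L_{p_1}(\mu;W_1)]_\theta=L_{p_\theta}(\mu;[W_0,W_1]_\theta)$ over the two-point space, and then a passage to the inductive limit driven by the conditional expectations $E_N$, which are simultaneously contractive for all $(p,q)$, combined with functoriality of complex interpolation and a density argument. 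What the paper's route buys is brevity and no discussion of limits at all, at the price of invoking lattice theory (some hypothesis like order continuity or reflexivity is genuinely needed for Calder\'on's formula to give equality rather than a mere inclusion). What your route buys is a self-contained elementary argument that avoids convexity/concavity and reflexivity entirely, and that makes explicit two useful facts the paper leaves implicit: that $\cup_n X_n^{\C}$ is dense in $[A_0,A_1]_\theta$ and that the identification is the identity map on this union. Two details you gloss over, but which your own machinery repairs: injectivity of the extended embedding $X^{\C}(p,q)\to L_1(D^{\N})$ (Jensen gives contractivity on the union, not injectivity of the extension to the completion; it follows because $E_Nf\to f$ in $X^{\C}(p,q)$ and the embedding intertwines the $E_N$'s, so a vector mapping to $0$ in $L_1$ has $E_Nf=0$ for all $N$), and the fact that $E_N$ is one well-defined morphism of the couple $(A_0,A_1)$ rather than two unrelated extensions (again seen by comparing both extensions with the conditional expectation on $L_1$, with which they agree on the common dense union).
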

\begin{proof}
Note that $X(p,q)$ is a Banach lattice of functions on $(D^\N, \mu^{\otimes \N})$. Clearly, $X(p, q)$ is $\min(p,q)$-convex and $\max(p,q)$-concave in the sense of \S 1.d in \cite{Classical_Banach_2}, and hence by  Theorem 1.f.1 (p.\,80) and Proposition 1.e.3 (p.\,61) in \cite{Classical_Banach_2} it is reflexive. Then the above result is a particular case of a classical formula going back to Calder\'on (\cite{Calderon_Interpolation}, p.\,125).
\end{proof}

Recall that a Banach space $X$ over the complex field is $\theta$-Hilbertian ($0 \le \theta \le 1$) if there exists an interpolation pair $(X_0, X_1)$ of Banach spaces such that $X$ is isometric with $[X_0, X_1]_\theta$ and $X_1$ is a Hilbert space.
\begin{cor}
Let $1< p \ne q< \infty$. Then $X(p, q)$ is non-$\text{UMD}$ and $X^{\C}(p, q)$ is non-$\text{AUMD}$. Moreover, there exists $0 < \theta< 1$ such that $X^\C(p, q)$ is $\theta$-Hilbertian. In particular, $X^{\C}(p, q)$ and a fortiori $X(p, q)$ is super-reflexive.
\end{cor}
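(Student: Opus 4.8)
The plan is to prove the Corollary by assembling the three assertions—non-$\text{UMD}$-ness, non-$\text{AUMD}$-ness, $\theta$-Hilbertianity (hence super-reflexivity)—from the machinery already developed. First I would establish that $X(p,q)$ is non-$\text{UMD}$. Since $X_n(p,q)$ is isometric to $E_n$, and since each $X_n$ sits isometrically inside $X(p,q)$, the $\text{UMD}_s$ constant is monotone under isometric subspaces: $C_s(X(p,q)) \geq C_s(X_n(p,q)) = C_s(E_n)$. By Theorem \ref{mainthm}, $C_s(E_n) \geq c(p,q)^n$, and by \eqref{constant} we have $c(p,q) > 1$ whenever $p \neq q$. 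Letting $n \to \infty$ forces $C_s(X(p,q)) = \infty$, so $X(p,q)$ is not $\text{UMD}$. The non-$\text{AUMD}$ claim for $X^{\C}(p,q)$ follows by the identical argument, using $C_s^{a}(X_n^{\C}) = C_s^{a}(E_n) \geq \kappa(p,q)^n$ from Theorem \ref{mainthm_aumd} together with $\kappa(p,q) > 1$ from Proposition \ref{geo_mean}.

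Next I would prove $\theta$-Hilbertianity. The idea is to exhibit $X^{\C}(p,q)$ as a complex interpolation space one of whose endpoints is a Hilbert space, using Proposition \ref{inter}. Fix $p \neq q$ in $(1,\infty)$. I want to choose interpolation data $(p_0,q_0)$ and $(p_1,q_1)$ and a parameter $\theta \in (0,1)$ so that the interpolated exponents reproduce $(p,q)$ via $\frac{1}{p} = \frac{\theta}{p_1} + \frac{1-\theta}{p_0}$ and $\frac{1}{q} = \frac{\theta}{q_1} + \frac{1-\theta}{q_0}$, while forcing the second endpoint to be Hilbertian. The natural choice is to take the endpoint $(p_1,q_1) = (2,2)$, since then $X^{\C}(2,2) = L_2^{\C}(D^\N, \mu^{\otimes \N})$ is a Hilbert space by the Remark following the definition of $X(p,q)$. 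Given $(p,q)$ and having committed to $(p_1,q_1)=(2,2)$, the two linear equations in the unknowns $\frac{1}{p_0}, \frac{1}{q_0}, \theta$ are underdetermined, so there is ample freedom: I would pick some $\theta \in (0,1)$ and solve $\frac{1}{p_0} = \frac{1}{1-\theta}\bigl(\frac{1}{p} - \frac{\theta}{2}\bigr)$ and analogously for $q_0$, then verify that for $\theta$ close enough to $0$ the resulting $p_0, q_0$ lie in $(1,\infty)$ (as $\theta \to 0^+$ one has $\frac{1}{p_0} \to \frac{1}{p} \in (0,1)$, so $p_0 \to p \in (1,\infty)$, and similarly $q_0 \to q$). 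With such a choice, Proposition \ref{inter} gives the isometric identification $X^{\C}(p,q) = [X^{\C}(p_0,q_0), X^{\C}(2,2)]_\theta$, and since $X^{\C}(2,2)$ is a Hilbert space this is precisely the statement that $X^{\C}(p,q)$ is $\theta$-Hilbertian.

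Finally, super-reflexivity follows for free: it is a classical fact (traceable to Pisier) that every $\theta$-Hilbertian space with $\theta \in (0,1)$ is super-reflexive, so $X^{\C}(p,q)$ is super-reflexive, and then $X(p,q)$, being (isometric to) the real part of $X^{\C}(p,q)$—or directly reflexive by the convexity/concavity argument already used in the proof of Proposition \ref{inter}—is super-reflexive \emph{a fortiori}. The main obstacle, though a minor one, is the bookkeeping in the second paragraph: one must confirm that the chosen endpoints $(p_0,q_0)$ genuinely fall in the admissible range $(1,\infty)$ required by Proposition \ref{inter}, which is why I would take $\theta$ small and appeal to continuity of the exponents as $\theta \to 0$. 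Everything else is a direct citation of the already-proven exponential lower bounds and the interpolation identity.
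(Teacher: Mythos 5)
Your proposal is correct and follows essentially the same route as the paper: the exponential lower bounds $C_s(E_n)\geq c(p,q)^n$ and $C_s^{a}(E_n)\geq \kappa(p,q)^n$ combined with the isometric inclusions $X_n(p,q)\subset X(p,q)$ give non-$\text{UMD}$ and non-$\text{AUMD}$, and the paper proves $\theta$-Hilbertianity exactly as you do, taking $(2,2)$ as the Hilbertian endpoint in Proposition \ref{inter} and choosing $\theta$ small enough that the solved exponents $\tilde{p},\tilde{q}$ (your $p_0,q_0$) lie in $(1,\infty)$, with super-reflexivity then quoted from Pisier's result on $\theta$-Hilbertian spaces. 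Your extra details (monotonicity of $\text{UMD}$ constants under closed subspaces, the continuity check as $\theta\to 0^{+}$) are exactly the steps the paper leaves implicit; only your parenthetical suggestion that reflexivity alone could substitute for the \emph{a fortiori} subspace argument should be dropped, since reflexivity is strictly weaker than super-reflexivity.
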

\begin{proof}
It follows easily from Theorem \ref{mainthm} and Theorem \ref{mainthm_aumd} that $X(p, q)$ is non-$\text{UMD}$ and $X^{\C}(p, q)$ is non-$\text{AUMD}$. 

For $0< \theta < 1$ small enough, such that $\max( \frac{1/p- \theta/2}{1-\theta},  \frac{1/q- \theta/2}{1-\theta}) < 1$, we can find $1< \tilde{p}, \tilde{q} < \infty$ satisfying the equalities: $$ \frac{1}{p} = \frac{\theta}{2} + \frac{1-\theta}{\tilde{p}}, \quad \frac{1}{q} = \frac{\theta}{2} + \frac{1-\theta}{\tilde{q}}.$$ By Proposition \ref{inter}, we have $$ X^\C(p, q) = [X^\C(\tilde{p}, \tilde{q}), X^\C(2, 2)]_\theta.$$ Since $X^\C(2, 2) = L^2_{\C}(D^\N, \mu^{\otimes \N})$ is Hilbertian, $X^\C(p, q)$ is $\theta$-Hilbertian.  The super-reflexivity of $X^\C(p,q)$ follows from the well-known fact that any $\theta$-Hilbertian space is super-reflexive for $\theta > 0$ (cf.\cite{Pisier_interpolation_lattice}).
\end{proof}

\begin{rem}
Let $1 < p\neq q < \infty$. For any $ 0 < \eta < 1$, let $\frac{1}{p_\eta} = \frac{1-\eta}{p} + \frac{\eta}{q}$ and $\frac{1}{q_\eta} = \frac{1-\eta}{q} + \frac{\eta}{p}$. By Proposition \ref{inter}, we have $$X^\C(p_\eta, q_\eta) = [X^{\C}(p, q), X^\C(q, p)]_\eta. $$ Note that in this interpolation scale, there is only one $\text{UMD}$ space corresponding to $\eta = \frac{1}{2}$.  
\end{rem}

For futher discussions, let us now turn to the non-atomic case and modify slightly the definitions. For any $1 < p, q < \infty$, consider the family of spaces $Z_n = Z_n(p,q)$ defined by recursion: $Z_0 = \C$ and $Z_{n+1} = Z_n(L_p(\T, m; L_q(\T, m))$. From the definition, we have $$Z_n(p,q) \subset Z_{n+1}(p,q).$$ Thus we can define $$Z(p, q) = \lim_{\longrightarrow}Z_n(p,q).$$ To avoid ambiguity, let us emphasize the inclusions $Z_n(p,q) \subset Z_{n+1}(p,q)$ used to define the inductive limit. For simplicity of notations, we will write $L_{p_1} L_{p_2} = L_{p_1}(L_{p_2})$, $L_{p_1}L_{p_2} L_{p_3} = L_{p_1}(L_{p_2}(L_{p_3}))$, etc. With these notations, one can easily see the difference between $X_n$ and $Z_n$ as follows: $$X_{n + 1} = L_p(L_q(X_n)) = L_p L_q \underbrace{L_p L_q \cdots L_p L_q}_{X_n},$$ where $L_p = L_p(D, \mu)$ and $L_q = L_q(D, \mu)$ are two dimensional. And
$$Z_{n + 1} = Z_n(L_p(L_q)) =   \underbrace{L_p L_q \cdots L_p L_q}_{Z_n} L_p L_q,$$ where $L_p = L_p(\T, m)$ and $L_q = L_q(\T, m)$.

\begin{rem}
The main purpose of introducing the spaces $Z_n(p, q)$ is that we have lattice isometric isomorphisms $L_p(Z_n(p,q)) \simeq Z_n(p,q)$ for all $n$ and moreover, these isomorphisms are compatible with the inclusion of $Z_n(p,q) \subset Z_{n+1}(p,q)$ (the word ``compatible'' will be explained by a commutative diagram in the sequel) and this will be used to show some additional properties for $Z(p,q)$. The family of $X_n(p,q)$'s shares the property of having lattice isometric isomorphisms $L_p(X_n(p,q)) \simeq X_n(p,q)$ for all $n$, but the isomorphisms are not compatible with the inclusions $X_n(p,q) \subset X_{n+1}(p,q)$.
\end{rem}

The $Z(p,q)$'s are Banach lattices of functions on the infinite torus $\T^\N$, they have the following properties.
\begin{prop}
Let $1< p, q < \infty$. We have isomorphisms $$Z(p, q) \simeq Z(q,p)$$ and $$L_p(Z(p,q)) \simeq L_q(Z(p,q)).$$ If $p \ne q$, then $Z(p,q)$ does not have unconditional basis.
\end{prop}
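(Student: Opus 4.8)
The plan is to prove the three claims in turn, starting with the two isomorphisms, which should follow directly from the recursive structure of the $Z_n$'s, and then establishing the failure of unconditional basis, which I expect to be the main obstacle.

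For the isomorphism $Z(p,q) \simeq Z(q,p)$, my first step is to observe the symmetry of the construction. Since $Z_{n+1}(p,q) = Z_n(p,q)(L_p(\T) L_q(\T))$ unfolds as $\underbrace{L_p L_q \cdots L_p L_q}_{Z_n} L_p L_q$, I would like to relate this to the corresponding expansion of $Z_n(q,p)$. The key tool here is Fubini's theorem in the form of the lattice isometry $L_p(\T; L_q(\T; Y)) \simeq L_p(\T; L_q(\T)) (Y)$ together with the noncommutative-flavoured fact that, at the level of the infinite torus $\T^\N$, interchanging the roles of $p$ and $q$ amounts to relabelling the coordinate variables. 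I would set up an explicit isometric map on each $Z_n$ by permuting the pairs of torus variables, check that these maps are compatible with the inclusions $Z_n \subset Z_{n+1}$ (using the commutative diagram alluded to in the preceding remark), and pass to the inductive limit to obtain $Z(p,q) \simeq Z(q,p)$.

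For the isomorphism $L_p(Z(p,q)) \simeq L_q(Z(p,q))$, I would exploit the remark stating that $L_p(Z_n(p,q)) \simeq Z_n(p,q)$ via isometries compatible with the inclusions $Z_n \subset Z_{n+1}$. Taking the inductive limit of these compatible isometries yields $L_p(Z(p,q)) \simeq Z(p,q)$. Combining this with the first isomorphism $Z(p,q) \simeq Z(q,p)$ and the analogous identity $L_q(Z(q,p)) \simeq Z(q,p)$ should give the chain $L_p(Z(p,q)) \simeq Z(p,q) \simeq Z(q,p) \simeq L_q(Z(q,p)) \simeq L_q(Z(p,q))$, where the last step again uses $Z(q,p) \simeq Z(p,q)$; the only care needed is to verify that the several isomorphisms compose consistently.

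The hard part will be showing that $Z(p,q)$ has no unconditional basis when $p \ne q$. My plan is to argue by contradiction: if $Z(p,q)$ had an unconditional basis, then in particular it would be a Banach space with an unconditional finite-dimensional decomposition, and I would seek to derive a uniform $\text{UMD}$-type bound that contradicts the exponential growth $C_s(E_n) \geq c(p,q)^n$ established in Theorem~\ref{mainthm}. The technical core is to connect the existence of an unconditional basis (or local unconditional structure) of the inductive limit to uniform unconditionality constants of the finite-dimensional building blocks $Z_n \simeq E_n$. Since the $Z_n$ are super-reflexive but their $\text{UMD}$ constants blow up, and since for spaces with sufficiently good unconditional structure super-reflexivity forces the $\text{UMD}$ property, the incompatibility of these two facts should yield the contradiction; pinning down exactly which lattice- or basis-invariant is being used, and ensuring it is genuinely obstructed by the $c(p,q)^n$ lower bound rather than merely by reflexivity, is where the real work lies.
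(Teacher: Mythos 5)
Your route to the first isomorphism $Z(p,q)\simeq Z(q,p)$ does not work. Permuting torus coordinates induces an isometry of $L_p(\T_1; L_q(\T_2))$ onto $L_p(\T_2; L_q(\T_1))$: it relabels which copy of $\T$ carries which exponent, but it preserves the nesting order ``$L_p$ outside, $L_q$ inside.'' No permutation of variables carries $Z_n(p,q)=L_pL_q\cdots L_pL_q$ onto $Z_n(q,p)=L_qL_p\cdots L_qL_p$; for $p\neq q$ the mixed norms $L_p(L_q)$ and $L_q(L_p)$ on the same product space are genuinely different (Minkowski's integral inequality gives only a strict one-sided comparison), and this order-sensitivity is the entire engine of the paper. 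The paper's actual proof is non-constructive: it first observes that $Z(p,q)$ and $Z(q,p)$ embed complementably into each other (via conditional expectations onto functions constant in the outermost variable), then that each space is isomorphic to its own square --- using $\ell_p^{(2)}(L_p)=L_p$ together with $L_p(Z(p,q))\simeq Z(p,q)$ --- and concludes by the Pe\l{}czy\'nski decomposition method; the resulting isomorphism is neither isometric nor induced by any map of the underlying measure spaces. Your derivation of $L_p(Z(p,q))\simeq Z(p,q)$ by passing the compatible isometries to the inductive limit is correct and is what the paper does, but your chain $L_p(Z(p,q))\simeq Z(p,q)\simeq Z(q,p)\simeq L_q(Z(q,p))\simeq L_q(Z(p,q))$ (which is the paper's chain) uses the first isomorphism twice, so it inherits the gap.

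The no-unconditional-basis part also rests on a false principle. It is not true that super-reflexivity plus an unconditional basis (or uniform unconditional structure of the blocks $E_n$) forces UMD: Bourgain's super-reflexive non-UMD Banach lattices \cite{Bourgain, Bourgain_UMD}, recalled in the introduction, are essentially sums of finite-dimensional lattices with $1$-unconditional bases, so good unconditional structure coexists with arbitrarily large UMD constants --- unconditionality of coordinate expansions says nothing about unconditionality of martingale differences. The missing ingredient is Aldous's theorem (\cite{uncond_bases}, Proposition 4), which converts an unconditional basis into a UMD bound precisely for spaces of the form $L_p(F)$, $1<p<\infty$. The paper's argument is: $Z(p,q)$ is non-UMD when $p\neq q$ (by Theorem \ref{mainthm}, since $E_n$ embeds isometrically into $Z(p,q)$ and UMD passes to subspaces), hence $L_p(Z(p,q))$ is non-UMD; by Aldous, $L_p(Z(p,q))$ has no unconditional basis; and by the self-similarity $Z(p,q)\simeq L_p(Z(p,q))$ --- the part you did establish --- neither does $Z(p,q)$. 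So the self-similarity, not super-reflexivity versus the growth $c(p,q)^n$ alone, is what makes the contradiction run.
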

\begin{proof}
Since $L_p(\T)$ and $L_p(\T \times \T)$ are isometric as Banach lattices, we have isometric isomorphisms which are compatible with the inclusions $Z_n \subset Z_{n+1}$, that is we have the commutative diagram $$\begin{CD}Z_n(p,q) @> \text{ inclusion }>> Z_{n+1}(p,q) \\ @V\text{ isometric }V\simeq V @V\simeq V \text{ isometric} V \\L_p(Z_{n}(p,q)) @ >\text{ inclusion } >> L_p(Z_{n+1}(p,q)).\end{CD}$$By taking Banach space inductive limit, we have $$Z(p,q) \xrightarrow[\text{isometric}]{\simeq} L_p(Z(p,q)).$$ If $p \ne q$, then $Z(p,q)$ and hence $L_p(Z(p,q))$ is non-$\text{UMD}$. By a result of D.J. Aldous (see \cite{uncond_bases}, Proposition 4), $Z(p,q)$ has no unconditional basis. 

It is easy to see that $Z(p,q)$ and $Z(q,p)$ complementably embed into each other. Since $\ell_p^{(2)}(L_p) = L_p$ as Banach lattices, we have $$\ell_p^{(2)}(L_p(Z(p,q))) = L_p(Z(p,q)).$$ Moreover, since $L_p(Z(p,q)) = Z(p,q)$, the above isometry implies that as Banach space $Z(p,q) = Z(p,q) \oplus Z(p,q)$. Similarly, $Z(q,p) = Z(q,p) \oplus Z(q, p)$. By the classical Pe\l{}cy\'nski decomposition method, we have $Z(p, q) \simeq Z(q, p)$. Hence $$\qquad \quad L_p(Z(p,q)) = Z(p,q) \simeq Z(q,p) = L_q(Z(q, p)) \simeq L_q(Z(p,q)).$$
\end{proof}

Let $(p_i)_{i\geq 1}$ be a sequence of real numbers such that $1 < p_i < \infty$. Define $$X[(p_i)] = \lim_{\longrightarrow} L_{p_n} \cdots L_{p_2} L_{p_1}$$ and $$Z[(p_i)]  = \lim_{\longrightarrow} L_{p_1} L_{p_2} \cdots L_{p_n}.$$ 
\begin{prob}
Under which condition is $X[(p_i)]$ or $Z[(p_i)]$ in the $\text{UMD}$ class ?
\end{prob}

We have the following observations on the necessary condition:
\begin{itemize}
\item[(i)]
A trivial necessary condition is that there exist $1< p_0, p_\infty <\infty$, such that $p_0 \leq p_i \leq p_\infty$ for all $i \geq 1$.
\item[(ii)] If the above condition is satisfied, then the sequence $(p_i)$ has at least one cluster point $1<p<\infty$. Then a necessary condition is that the sequence has only one cluster point, i.e.  $\lim_{i\to \infty} p_i = p$. Indeed, assume that the sequence $(p_i)$ has two cluster points $1< p \ne q < \infty$, so that there exist two subsequences of $(p_i)$ which tend to $p,q$ respectively. Then one can easily show that by Theorem \ref{mainthm}, both $X[(p_i)]$ and $Z[(p_i)]$ are non-$\text{UMD}$ (they are in fact non-$\text{AUMD}$).
\item[(iii)] Now the speed of convergence of $(p_i)$ will play a role. Since $\ell_{p_1}^{(2)}(\ell_{p_2}^{(2)}(\cdots (\ell_{p_n}^{(2)})\cdots ))$ embeds isometrically into $L_{p_1}L_{p_2} \cdots L_{p_n}$. A necessary condition for $Z[(p_i)]$ to be $\text{UMD}$ is $\prod_i c(p_{2i}, p_{2i+1}) < \infty$. Similarly, it is necessary that $\prod_i c(p_{2i+1}, p_{2i+2}) < \infty$. Combining these, a necessary condition for $Z[(p_i)]$ to be in the $\text{UMD}$ class is $$\prod_i c(p_i, p_{i+1}) < \infty.$$ The same statement remains true for $X[(p_i)]$. Note that by \eqref{constant}, $c(p_i, p_{i+1})>1$ if $p_i \ne p_{i+1}$.
\end{itemize}

Intuitively, if $p_i$ tends to $p$ sufficiently fast, then both $X[(p_i)]$ and $Z[(p_i)]$ are in the $\text{UMD}$ class. The author obtained some partial results in this direction, which will be treated elsewhere.

\begin{rem}\label{monotone_rem}
Let $1 < p < q < \infty$. We have the following Banach lattices isometries $$L_pL_q = L_pL_pL_q, \quad L_pL_q =  L_pL_qL_q.$$ Since $L_pL_rL_q$ is an interpolation space between $ L_pL_pL_q$ and $L_pL_qL_q$ for any $p \leq r \leq q$, the $\text{UMD}_s$ constant of $L_pL_rL_q$ is actually the same as that of $L_p(L_q)$. The same argument shows that $L_pL_uL_rL_vL_q$ has the same $\text{UMD}_s$ constant with $L_pL_q$, provided $p \leq u \leq r \leq v \leq q$. More generally, if $ ( p_i)_{i=1}^n$ is a finite sequence, assume that $(p_i)_{i=k}^{l}$ is consecutive monotone (non-increasing or non-decreasing) subsequence, then $L_{p_1} \cdots L_{p_k} \cdots L_{p_l} \cdots L_{p_n}$ and $L_{p_1}\cdots L_{p_k}L_{p_l}\cdots L_{p_n}$ have the same $\text{UMD}_s$ constant for all $1 < s < \infty$.
\end{rem}

Our results have some applications in the non-commutative setting, i.e. on the operator space $\text{UMD}$ property, which will appear in a future publication.

\section*{Acknowledgements}
This work was carried out while the author was visiting at Texas A\&M University. The author would like to acknowledge the hospitality provided by Department of Mathematics of Texas A\&M. He would like to thank his advisor G. Pisier for suggesting this problem and for the constant and stimulating discussions. The author also appreciates the careful review of the paper by the referee who suggested many changes to enhance the readability of the paper.


\begin{thebibliography}{10}

\bibitem{uncond_bases}
D.~J. Aldous.
\newblock Unconditional bases and martingales in {$L_{p}(F)$}.
\newblock {\em Math. Proc. Cambridge Philos. Soc.}, 85(1):117--123, 1979.

\bibitem{Bourgain}
J.~Bourgain.
\newblock Some remarks on {B}anach spaces in which martingale difference
  sequences are unconditional.
\newblock {\em Ark. Mat.}, 21(2):163--168, 1983.

\bibitem{Bourgain_UMD}
J.~Bourgain.
\newblock On martingales transforms in finite-dimensional lattices with an
  appendix on the {$K$}-convexity constant.
\newblock {\em Math. Nachr.}, 119:41--53, 1984.

\bibitem{Bourgain_H1_BMO}
J.~Bourgain.
\newblock Vector-valued singular integrals and the {$H^1$}-{BMO} duality.
\newblock In {\em Probability theory and harmonic analysis ({C}leveland,
  {O}hio, 1983)}, volume~98 of {\em Monogr. Textbooks Pure Appl. Math.}, pages
  1--19. Dekker, New York, 1986.

\bibitem{Burkholder_geo_UMD}
D.~L. Burkholder.
\newblock A geometrical characterization of {B}anach spaces in which martingale
  difference sequences are unconditional.
\newblock {\em Ann. Probab.}, 9(6):997--1011, 1981.

\bibitem{Burkholder1}
D.~L. Burkholder.
\newblock A geometric condition that implies the existence of certain singular
  integrals of {B}anach-space-valued functions.
\newblock In {\em Conference on harmonic analysis in honor of {A}ntoni
  {Z}ygmund, {V}ol. {I}, {II} ({C}hicago, {I}ll., 1981)}, Wadsworth Math. Ser.,
  pages 270--286. Wadsworth, Belmont, CA, 1983.

\bibitem{Burkholder3}
Donald~L. Burkholder.
\newblock Martingales and singular integrals in {B}anach spaces.
\newblock In {\em Handbook of the geometry of {B}anach spaces, {V}ol. {I}},
  pages 233--269. North-Holland, Amsterdam, 2001.

\bibitem{Calderon_Interpolation}
A.-P. Calder{\'o}n.
\newblock Intermediate spaces and interpolation, the complex method.
\newblock {\em Studia Math.}, 24:113--190, 1964.

\bibitem{Garling_UMD}
D.~J.~H. Garling.
\newblock On martingales with values in a complex {B}anach space.
\newblock {\em Math. Proc. Cambridge Philos. Soc.}, 104(2):399--406, 1988.

\bibitem{Classical_Banach_2}
Joram Lindenstrauss and Lior Tzafriri.
\newblock {\em Classical {B}anach spaces. {II}}, volume~97 of {\em Ergebnisse
  der Mathematik und ihrer Grenzgebiete}.
\newblock Springer-Verlag, Berlin, 1979.
\newblock Function spaces.

\bibitem{Pisier_superR_nonUMD}
G.~Pisier.
\newblock Un exemple concernant la super-r\'eflexivit\'e.
\newblock In {\em S\'eminaire {M}aurey-{S}chwartz 1974--1975: {E}spaces
  {$L^{p}$}\ applications radonifiantes et g\'eom\'etrie des espaces de
  {B}anach, {A}nnexe {N}o. 2}, page~12. Centre Math. \'Ecole Polytech., Paris,
  1975.

\bibitem{Pisier_interpolation_lattice}
G.~Pisier.
\newblock Some applications of the complex interpolation method to {B}anach
  lattices.
\newblock {\em J. Analyse Math.}, 35:264--281, 1979.

\bibitem{Rubio_de_Francia_mart}
Jos{\'e}~L. Rubio~de Francia.
\newblock Martingale and integral transforms of {B}anach space valued
  functions.
\newblock In {\em Probability and {B}anach spaces ({Z}aragoza, 1985)}, volume
  1221 of {\em Lecture Notes in Math.}, pages 195--222. Springer, Berlin, 1986.

\bibitem{Banach_Mazur}
Nicole Tomczak-Jaegermann.
\newblock {\em Banach-{M}azur distances and finite-dimensional operator
  ideals}, volume~38 of {\em Pitman Monographs and Surveys in Pure and Applied
  Mathematics}.
\newblock Longman Scientific \& Technical, Harlow, 1989.

\end{thebibliography}
\end{document}